\newtheorem{thm}{Theorem}[section]
\newtheorem*{thm*}{Theorem}
\newtheorem{lemma}[thm]{Lemma}
\newtheorem*{lemma*}{Lemma}
\newtheorem{cor}[thm]{Corollary}
\newtheorem*{cor*}{Corollary}
\theoremstyle{definition}
\newtheorem{ex}[thm]{Example}
\newtheorem{rem}[thm]{Remark}
\newcommand{\GG}{\ensuremath{\mathbf{G}}}
\newcommand{\G}{\ensuremath{\mathsf{G}}}
\newcommand{\A}{\ensuremath{\mathsf{A}}}
\newcommand{\irr}{\ensuremath{\mathrm{irr}}}
\newcommand{\wirr}{\ensuremath{\widetilde{\mathrm{irr}}}}
\newcommand{\fg}{\ensuremath{\mathfrak g}}
\newcommand{\Places}{\ensuremath{\mathcal V}}
\newcommand{\RF}{\ensuremath{\mathfrak K}}
\DeclareMathOperator{\Cl}{Cl}
\DeclareMathOperator{\sign}{sgn}
\numberwithin{equation}{section}
\renewcommand{\le}{\leqslant}
\renewcommand{\ge}{\geqslant}
\def\emph{}
\DeclareTextFontCommand{\bfemph}{\bf}
\DeclareTextFontCommand{\itemph}{\it}
\def\emph{\bfemph}
\def\blankfootnote{\xdef\@thefnmark{}\@footnotetext}
\newcommand*{\textlabel}[2]{%
  \edef\@currentlabel{#1}
  \phantomsection
  #1\label{#2}
}
\newcommand{\idx}[1]{\lvert#1\rvert}
\newcommand{\incl}{\hookrightarrow}
\newcommand{\acts}{\ensuremath{\curvearrowright}}
\let\AA\undefined
\newcommand{\AA}{\mathbf{A}}
\newcommand{\FF}{\mathbf{F}}
\newcommand{\QQ}{\mathbf{Q}}
\newcommand{\NN}{\mathbf{N}}
\newcommand{\ZZ}{\mathbf{Z}}
\newcommand{\CC}{\mathbf{C}}
\newcommand{\MM}{\mathbf M}
\newcommand{\Zeta}{\ensuremath{\mathsf{Z}}}
\newcommand{\xx}{\ensuremath{\bm x}}
\newcommand{\fp}{\mathfrak{p}}
\newcommand{\fP}{\mathfrak{P}}
\newcommand{\fo}{\mathfrak{o}}
\newcommand{\fO}{\mathfrak{O}}
\newcommand{\cA}{\mathcal{A}}
\newcommand{\cE}{\mathcal{E}}
\newcommand{\cH}{\mathcal{H}}
\DeclareMathOperator{\GL}{GL}
\DeclareMathOperator{\End}{End}
\DeclareMathOperator{\Spec}{Spec}
\DeclareMathOperator{\topo}{top}
\DeclareMathOperator{\trace}{trace}
\newcommand{\Euler}{\ensuremath{\chi}}
\DeclareMathOperator{\dd}{d\!}
\newcommand{\normal}{\triangleleft}
\newcommand{\dtimes}{\ensuremath{\,\cdotp}}
\newcommand{\noof}[1]{\#{#1}}
\DeclarePairedDelimiter{\abs}{\lvert}{\rvert}
\DeclarePairedDelimiter{\norm}{\lVert}{\rVert}
\DeclarePairedDelimiter{\red}{\lfloor}{\rfloor}
\newcommand{\llb}{\ensuremath{[\![ }}
\newcommand{\llp}{\ensuremath{(\!( }}
\newcommand{\rrb}{\ensuremath{]\!] }}
\newcommand{\rrp}{\ensuremath{)\!) }}
\newcommand{\subalign}[1]{%
  \vcenter{%
    \Let@ \restore@math@cr \default@tag
    \baselineskip\fontdimen10 \scriptfont\tw@
    \advance\baselineskip\fontdimen12 \scriptfont\tw@
    \lineskip\thr@@\fontdimen8 \scriptfont\thr@@
    \lineskiplimit\lineskip
    \ialign{\hfil$\m@th\scriptstyle##$&$\m@th\scriptstyle{}##$\crcr
      #1\crcr
    }%
  }
}
\title{Stability results for local zeta functions of groups and related structures}
\author{Tobias Rossmann}
\affil{\small Fakult\"at f\"ur Mathematik, Universit\"at Bielefeld, D-33501
  Bielefeld, Germany}
\date{April 2015}
\begin{document}

\maketitle
\thispagestyle{empty}

\begin{abstract}
  \small
  Various types of local zeta functions studied in asymptotic group
  theory admit two natural operations: (1) change the prime and (2) perform
  local base extensions.
  Often, the effects of both of these operations
  can be expressed simultaneously in terms of a single explicit formula.
  We show that in these cases, the behaviour of local zeta functions under
  variation of the prime in a set of density~$1$
  in fact completely determines these functions for almost all primes 
  and, moreover, it also determines their behaviour under local base extensions.
  We discuss applications to topological zeta functions, functional equations,
  and questions of uniformity.
\end{abstract}

\blankfootnote{\indent{\itshape 2000 Mathematics Subject Classification.}
 11M41, 20F69.

{\itshape Keywords.} Asymptotic group theory, zeta functions, base extension,
functional equations.

 This work is supported by the DFG Priority Programme  ``Algorithmic and
 Experimental Methods in Algebra, Geometry and Number Theory'' (SPP 1489).}

\section{Introduction}

For a finitely generated nilpotent group $G$ and
a prime $p$, let $\zeta^{\wirr}_{G,p}(s)$ be the Dirichlet series enumerating
continuous irreducible finite-dimensional complex representations of 
the pro-$p$ completion $\hat G_p$ of $G$, counted up to equivalence and
tensoring with continuous $1$-dimensional representations.
In this article, we prove statements of the following form.

\begin{thm*}
  Let $G$ and $H$ be finitely generated nilpotent groups
  such that $\zeta^{\wirr}_{G,p}(s) = \zeta^{\wirr}_{H,p}(s)$ for all primes $p$ in a set of
  density $1$.
  Then $\zeta^{\wirr}_{G,p}(s) = \zeta^{\wirr}_{H,p}(s)$ for almost all primes $p$.
\end{thm*}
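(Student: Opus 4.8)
The plan is to reduce the statement, via the uniformity of these local zeta functions under base extension — the ``single explicit formula'' referred to in the abstract — to a rigidity statement about the arithmetic functions $q\mapsto\abs{Y(\FF_q)}$ attached to $\ZZ$-schemes $Y$, and then to settle that rigidity statement with Chebotarev's density theorem. The key point is that a set of primes of density $1$, though far too thin to single out any individual prime, is exactly thick enough to meet every Frobenius conjugacy class.

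First I would invoke the known uniformity and rationality of these zeta functions under base extension: there exist a finite set of primes $S$, an integer $M$, a polynomial $D(X,T)=\prod_j(1-X^{a_j}T^{b_j})$ with all $b_j\ge 1$, and, for each $n\le M$, a $\ZZ$-linear combination $Y^G_n=\sum_r m_{n,r}[X_{n,r}]$ of classes of finite-type $\ZZ$-schemes of good reduction outside $S$, such that for every prime $p\notin S$, every finite extension $\fO$ of $\ZZ_p$ with residue cardinality $q$, and $t=q^{-s}$,
\[
\zeta^{\wirr}_{G,\fO}(s)=\frac{\sum_{n=0}^{M}\abs{Y^G_n(\FF_q)}\,t^n}{D(q,t)},\qquad\text{where }\abs{Y^G_n(\FF_q)}:=\sum_r m_{n,r}\abs{X_{n,r}(\FF_q)},
\]
and likewise for $H$; enlarging $S$ and $M$ and passing to a common denominator of the same shape, I may take the two sets of data to be parallel. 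Since $D(q,t)$ is a unit of $\QQ\llb t\rrb$, the hypothesis $\zeta^{\wirr}_{G,p}(s)=\zeta^{\wirr}_{H,p}(s)$ for all $p$ in a density-$1$ set $P$, after clearing denominators and comparing the coefficients of $t^n$, gives $\abs{Y^G_n(\FF_p)}=\abs{Y^H_n(\FF_p)}$ for all $0\le n\le M$ and all $p\in P\setminus S$.

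It remains to propagate these coincidences from $P$ to all prime powers of almost every characteristic, and this is the conceptual heart of the argument. Fix $n$, let $V^\bullet=\sum_i(-1)^i[H^i_c((Y^\bullet_n)_{\overline\QQ},\QQ_\ell)]$ in the Grothendieck group of $\ell$-adic representations of $\operatorname{Gal}(\overline\QQ/\QQ)$, and put $\rho_n=V^G-V^H$; this is unramified outside a finite set $S'$. By the Grothendieck--Lefschetz trace formula, together with the comparison of the cohomology of the generic and special fibres at primes outside a finite set, $\abs{Y^\bullet_n(\FF_q)}=\trace(\operatorname{Frob}_q\mid V^\bullet)$ whenever $\operatorname{char}\FF_q\notin S'$, so the previous step says $\trace(\operatorname{Frob}_p\mid\rho_n)=0$ for all $p\in P\setminus S'$. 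Now, in every finite quotient $\operatorname{Gal}(L/\QQ)$ with $L$ unramified outside $S'$, each Frobenius conjugacy class has positive density, hence is met by $P$; therefore $\{\operatorname{Frob}_p:p\in P\setminus S'\}$ surjects onto $\operatorname{Gal}(L/\QQ)$ for every such $L$, and $\rho_n$ sends this set to a dense subset of its image. As the trace of $\rho_n$ is continuous and vanishes on that dense set, it vanishes identically; the semisimplification of $\rho_n$ is therefore zero, so $\trace(\operatorname{Frob}_q\mid\rho_n)=0$, i.e.\ $\abs{Y^G_n(\FF_q)}=\abs{Y^H_n(\FF_q)}$, for every prime power $q$ of characteristic outside $S'$ and every $n\le M$. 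Substituting back into the formula gives $\zeta^{\wirr}_{G,\fO}(s)=\zeta^{\wirr}_{H,\fO}(s)$ for every finite extension $\fO$ of $\ZZ_p$ and every prime $p$ outside the \emph{finite} set $S'$; the special case $\fO=\ZZ_p$ is the theorem, and the general case is the base-extension strengthening promised in the introduction. The main obstacle, beyond arranging the uniformity input in exactly this ``numerator of point counts over a fixed cyclotomic denominator'' form, is the cohomological rigidity step — the generic-versus-special-fibre comparison at primes of good reduction, the passage to semisimplifications, and the precise Chebotarev input — but the density-$1$ hypothesis itself enters in just one place, to make the relevant Frobenius elements dense.
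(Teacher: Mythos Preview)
Your proposal is correct and follows essentially the same route as the paper: reduce via the Denef-type explicit formulae of Stasinski--Voll to an identity among point counts of $\ZZ$-schemes, then use the Grothendieck trace formula together with Chebotarev (Frobenii from a density-$1$ set are dense in the relevant Galois group) to propagate the vanishing of a virtual $\ell$-adic character from the given primes to all Frobenius powers outside a finite set. The paper packages this as an abstract rigidity statement for ``$k$-local maps of Denef type'' (its Theorem~\ref{thm:rigidity} and Corollary~\ref{cor:rigidity}) rather than working with the specific numerator/denominator shape you write down, but the reduction to polynomial $W_i$ by clearing denominators and comparing coefficients, and the Chebotarev/continuity step, are identical in substance.
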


Much more can be said if, in addition to the variation of $p$, we also take into
account local base extensions.
To that end, as explained in~\cite{SV14}, we may
construct a unipotent group scheme $\G$ over $\ZZ$ such that $\G\otimes_{\ZZ} \QQ$ is
the Mal'cev completion of $G$ regarded as an algebraic $\QQ$-group.
It follows that  $G$ and $\G(\ZZ)$ are
commensurable and that $\hat G_p = \G(\ZZ_p)$ for almost all $p$, where $\ZZ_p$
denotes the $p$-adic integers;
let $\mathsf H$ be a unipotent group scheme associated with $H$ in the same way.
The preceding theorem can be sharpened as follows.
\begin{thm*}
  Suppose that $\zeta^{\wirr}_{\G(\ZZ_p)}(s) = \zeta^{\wirr}_{\mathsf H(\ZZ_p)}(s)$ for all primes $p$ in a set of
  density $1$.
  Then $\zeta^{\wirr}_{\G(\fO_K)}(s) = \zeta^{\wirr}_{\mathsf H(\fO_K)}(s)$
  for almost all primes $p$ and all finite extensions $K$ of the field $\QQ_p$
  of $p$-adic numbers, where
  $\fO_K$ denotes the valuation ring of $K$. 
\end{thm*}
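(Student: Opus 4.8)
The plan is to use the ``Denef-type'' uniformity formulae for $\zeta^{\wirr}$ to reduce the assertion to an identity between $\ZZ[q]$-linear combinations of point counts of varieties over finite fields, and then to upgrade that identity from a set of primes of density~$1$ to all prime powers by an $\ell$-adic cohomology and Chebotarev argument. First I would record, from the rationality and uniformity results behind~\cite{SV14} (and the structure of the $\fp$-adic integrals computing $\zeta^{\wirr}$), that after fixing a modulus $M$ there are finitely many $\ZZ[1/M]$-varieties $V^{\G}_\alpha$ and rational functions $m^{\G}_\alpha$ of $q$ and $q^{-s}$, regular at $q^{-s}=0$, with
\[
  \zeta^{\wirr}_{\G(\fO_K)}(s) \;=\; \sum_\alpha \#\overline{V^{\G}_\alpha}(\FF_q)\, m^{\G}_\alpha\bigl(q,q^{-s}\bigr)
\]
for every prime $p\nmid M$ and every finite extension $K$ of $\QQ_p$, where $q=q_K$ is the residue field size, $\overline{V^{\G}_\alpha}$ the reduction of $V^{\G}_\alpha$ modulo $p$, and the $V^{\G}_\alpha$, $m^{\G}_\alpha$ do not depend on $K$; likewise for $\mathsf H$, with data $V^{\mathsf H}_{\alpha'}$, $m^{\mathsf H}_{\alpha'}$. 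Since every finite extension of $\QQ_p$ has residue field of size $p^f$ for some $f\ge 1$, it suffices to show: if $D(q,q^{-s})$ denotes the formal difference of the two right-hand sides, and $D(p,p^{-s})=0$ for all $p$ in a set of density~$1$, then $D(p^f,p^{-fs})$ vanishes identically for almost all $p$ and all $f\ge 1$.

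Next I would separate the ``point-counting'' part from the ``$q,q^{-s}$'' part. Choosing a basis $\mu_1,\dots,\mu_r$ of the $\QQ(q)$-span of all the $m^{\G}_\alpha$ and $m^{\mathsf H}_{\alpha'}$ that occur, I would rewrite $D(q,q^{-s})=\sum_{\beta=1}^{r} B_\beta(q)\,\mu_\beta(q,q^{-s})$, where $B_\beta(q)=\sum_\alpha g_{\beta\alpha}(q)\,\#\overline{W_\alpha}(\FF_q)$ for finitely many $\ZZ[1/M]$-varieties $W_\alpha$ and, after clearing denominators, $g_{\beta\alpha}\in\ZZ[q]$. Expanding $\mu_\beta(q,q^{-s})=\sum_{n\ge 0} r_{\beta,n}(q)\,q^{-ns}$ with $r_{\beta,n}\in\QQ(q)$, the $\QQ(q)$-linear independence of $\mu_1,\dots,\mu_r$ supplies indices $n_1,\dots,n_r$ with $\det\bigl(r_{\beta,n_l}(q)\bigr)_{\beta,l}\ne 0$. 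For $p$ in the given density-$1$ set outside the finitely many poles involved and the zeros of that determinant, vanishing of $D(p,p^{-s})$ forces $\sum_\beta B_\beta(p)\,r_{\beta,n_l}(p)=0$ for $l=1,\dots,r$; this is a linear system over $\QQ$ with invertible matrix, so $B_\beta(p)=0$ for every $\beta$. Hence it remains to prove: if $\sum_\alpha g_\alpha(p)\,\#\overline{W_\alpha}(\FF_p)=0$ for all $p$ in a set of density~$1$ (with $g_\alpha\in\ZZ[q]$), then $\sum_\alpha g_\alpha(p^f)\,\#\overline{W_\alpha}(\FF_{p^f})=0$ for almost all $p$ and all $f\ge 1$; applying this to each $B_\beta$ and running the identities backwards then gives $D(p^f,p^{-fs})\equiv 0$, i.e.\ $\zeta^{\wirr}_{\G(\fO_K)}(s)=\zeta^{\wirr}_{\mathsf H(\fO_K)}(s)$.

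This last statement is the arithmetic heart of the proof, and the step I expect to be the main obstacle. Fixing an auxiliary prime $\ell$ and spreading the $W_\alpha$ out over $\Spec\ZZ[1/M\ell]$, the Grothendieck--Lefschetz trace formula together with proper base change and the comparison of $\ell$-adic cohomology in mixed characteristic identifies $\#\overline{W_\alpha}(\FF_{p^f})$, for $p\nmid M\ell$, with the trace of the $f$-th power of geometric Frobenius on $\sum_w(-1)^w H^w_c\bigl(W_{\alpha,\overline\QQ},\QQ_\ell\bigr)$, and multiplication by $g_\alpha(q)$ with tensoring by a suitable combination of Tate twists; the left-hand side therefore equals $\trace\bigl(\mathrm{Frob}_p^f\mid N\bigr)$ for one virtual continuous $\ell$-adic representation $N$ of $\mathrm{Gal}(\overline\QQ/\QQ)$ that is unramified outside $M\ell$. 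Passing to semisimplifications, write $N=N^+-N^-$ with $N^+$, $N^-$ genuine semisimple representations sharing no constituent; the hypothesis (case $f=1$) says $\trace(\mathrm{Frob}_p\mid N^+)=\trace(\mathrm{Frob}_p\mid N^-)$ on a set of primes of density~$1$. The image of $N^+\oplus N^-$ is a compact, totally disconnected --- hence profinite --- subgroup $\Pi$ of some $\GL_n(\QQ_\ell)$, and by the Chebotarev density theorem applied to the finite quotients of $\Pi$ the Frobenii attached to any density-$1$ set of primes are dense in $\Pi$; since the two continuous trace functions agree on a dense subset they agree on $\Pi$, whence $N^+\cong N^-$ by Brauer--Nesbitt and $N=0$. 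Consequently $\trace(\mathrm{Frob}_p^f\mid N)=0$, that is, $\sum_\alpha g_\alpha(p^f)\,\#\overline{W_\alpha}(\FF_{p^f})=0$, for all $p\nmid M\ell$ and all $f\ge 1$, completing the argument. Besides this core step, the points needing the most care are (a) obtaining the uniform formula in the shape used above --- honest point-count coefficients, no residual exponential-sum terms, and dependence on $\fO_K$ only through $q$ --- which rests on~\cite{SV14} and the structure of the underlying integrals, and (b) the cohomological bookkeeping (spreading out, independence of $\ell$, and the normalisation of Frobenius and of the Tate twists) needed to package an arbitrary $\ZZ[q]$-combination of point counts as a single virtual compatible system.
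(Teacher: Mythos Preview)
Your proposal is correct and follows essentially the same strategy as the paper: reduce via the Denef-type formula of \cite{SV14} to showing that a $\ZZ[q]$-combination of point counts vanishing for a density-$1$ set of primes vanishes at all prime powers, then package this as a virtual $\ell$-adic Galois character and apply Chebotarev density plus continuity of traces. The paper's reduction to that arithmetic core is a bit simpler than your basis-and-determinant manoeuvre: it treats $q^{-s}$ as a formal variable $Y$, clears denominators so that the $W_i$ lie in $\ZZ[X,Y]$, and then extracts coefficients of each $Y$-monomial to land directly in $\ZZ[X]$, thereby avoiding any appeal to regularity of the individual $m_\alpha$ at $q^{-s}=0$.
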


Explicit formulae in the spirit of Denef's work on Igusa's local zeta
function (see~\cite{Den91a}) have been previously obtained 
for zeta functions such as $\zeta_{\G(\fO_K)}^{\wirr}(s)$ (see~\cite{SV14}).
These formulae are well-behaved under both variation of the
prime $p$ and under local base extensions $K/\QQ_p$.
Our technical main result, Theorem~\ref{thm:rigidity}, shows that if
some explicit formula behaves well under these two operations, then
every formula which remains valid if $p$ is changed also necessarily 
remains valid if local base extensions are performed,
at least after excluding a finite number of primes.
We prove this by interpreting the geometric ingredients of the explicit formulae
under consideration in terms of $\ell$-adic Galois representations and by
invoking Chebotarev's density theorem.
A collection of arithmetic applications of these techniques is given in
\cite{Ser12} which provides  the main inspiration for the present article.
In particular, Theorem~\ref{thm:rigidity} below draws upon
the following result.

\begin{thm*}[{\cite[Thm~1.3]{Ser12}}]
  Let $V$ and $W$ be schemes of finite type over $\ZZ$.
  Suppose that $\noof{V(\FF_p)} = \noof{W(\FF_p)}$ for all $p$ in a set of
  primes of density $1$.
  Then $\noof{V(\FF_{p^f})} = \noof{W(\FF_{p^f})}$
  for almost all primes $p$ and all $f\in \NN$.
\end{thm*}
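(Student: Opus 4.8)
The plan is to reinterpret the point counts $q\mapsto\noof{V(\FF_q)}$ and $q\mapsto\noof{W(\FF_q)}$ as traces of Frobenius on $\ell$-adic cohomology and then to apply Chebotarev's density theorem, following Serre~\cite{Ser12}. Fix an auxiliary prime $\ell$. By Deligne's constructibility theorem for higher direct images with compact support, together with the fact that a constructible $\QQ_\ell$-sheaf on a Dedekind scheme is lisse away from finitely many closed points, one may choose an integer $N$ divisible by $\ell$ such that $V$ and $W$ are of finite type over $R:=\ZZ[1/N]$ and all the (finitely many) nonzero sheaves $R^i(f_V)_!\QQ_\ell$ and $R^i(f_W)_!\QQ_\ell$ on $\Spec R$ are lisse, where $f_V\colon V\to\Spec R$ and $f_W\colon W\to\Spec R$ are the structure morphisms. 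Writing $\Gamma:=\pi_1(\Spec R)$ for the Galois group of the maximal extension of $\QQ$ unramified outside $N$, these lisse sheaves amount to continuous finite-dimensional $\QQ_\ell$-representations of $\Gamma$ whose generic stalks are $H^i_c(V_{\overline{\QQ}},\QQ_\ell)$ and $H^i_c(W_{\overline{\QQ}},\QQ_\ell)$; moreover, for each prime $p\nmid N$ these representations are unramified at $p$, and the geometric Frobenius $\mathrm{Frob}_p\in\Gamma$ (a well-defined conjugacy class) acts, via base change for $R(f_V)_!$, as the geometric Frobenius on $H^i_c(V_{\overline{\FF}_p},\QQ_\ell)$. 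Combined with the Grothendieck--Lefschetz trace formula over $\FF_{p^f}$ this gives, for all primes $p\nmid N$ and all $f\in\NN$,
\[
  \noof{V(\FF_{p^f})}=\sum_i (-1)^i\trace\bigl(\mathrm{Frob}_p^f\mid H^i_c(V_{\overline{\QQ}},\QQ_\ell)\bigr),
\]
and likewise for $W$. (If $V$ or $W$ is not separated, one first stratifies it into affine pieces and replaces its cohomology by the associated virtual cohomology, which is additive in stratifications; this affects nothing below.)

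Next I would define $\chi_V(g):=\sum_i (-1)^i\trace\bigl(g\mid H^i_c(V_{\overline{\QQ}},\QQ_\ell)\bigr)$ for $g\in\Gamma$, and $\chi_W$ analogously, and set $\phi:=\chi_V-\chi_W$. As finite alternating sums of traces of continuous finite-dimensional $\QQ_\ell$-representations of $\Gamma$, the functions $\chi_V$, $\chi_W$ and $\phi$ are continuous and conjugation-invariant. By the displayed identity, $\phi(\mathrm{Frob}_p^f)=\noof{V(\FF_{p^f})}-\noof{W(\FF_{p^f})}$ for all primes $p\nmid N$ and all $f\in\NN$, so the hypothesis of the theorem says exactly that $\phi(\mathrm{Frob}_p)=0$ for every prime $p$ in the given density-$1$ set $S$ with $p\nmid N$ --- and discarding the finitely many primes dividing $N$ leaves a set of density $1$.

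It then remains to show that $\phi$ vanishes identically on $\Gamma$. Suppose $\phi(g_0)\ne 0$ for some $g_0\in\Gamma$. Since $\{0\}$ is closed in $\QQ_\ell$ and $\phi$ is continuous, the set $U:=\{g\in\Gamma:\phi(g)\ne 0\}$ is open, conjugation-invariant, and contains $g_0$. Because $\Gamma$ is profinite there is an open normal subgroup $K$ of $\Gamma$ with $g_0K\subseteq U$; by conjugation-invariance of $U$, the union $C$ of all $\Gamma$-conjugates of $g_0K$ is then contained in $U$, is a union of cosets of $K$, and is the preimage of a (nonempty) conjugacy class of the finite group $\Gamma/K$. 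Chebotarev's density theorem, applied to the finite Galois extension of $\QQ$ with Galois group $\Gamma/K$, shows that $\{p\nmid N:\mathrm{Frob}_p\in C\}$ has positive density; since the complement of $S$ in the set of all primes has density $0$, at least one such prime lies in $S$, contradicting $\phi(\mathrm{Frob}_p)=0$. Hence $\phi\equiv 0$. Applying this to $g=\mathrm{Frob}_p^f$ for an arbitrary prime $p\nmid N$ and an arbitrary $f\in\NN$, and using the trace formula of the first paragraph, we obtain $\noof{V(\FF_{p^f})}=\noof{W(\FF_{p^f})}$; since only the finitely many primes dividing $N$ were excluded, this proves the theorem.

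I expect the genuine obstacle to be the first step --- producing, uniformly for almost all $p$, a \emph{single} Galois-theoretic object whose Frobenius traces simultaneously compute all of the counts $\noof{V(\FF_{p^f})}$. This rests on the $\ell$-adic formalism (constructibility and generic lisse-ness of the sheaves $R^i f_!\QQ_\ell$, base change for $Rf_!$, and the Grothendieck--Lefschetz trace formula) and requires some care when $V$ or $W$ is non-separated or non-reduced. By contrast, the passage from a set of primes of density $1$ to almost all primes and all residue degrees is the soft Chebotarev argument above, whose only point is that deleting a set of primes of density $0$ does not destroy the equidistribution of Frobenius classes.
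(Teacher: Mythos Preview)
Your proposal is correct and follows essentially the same route as the paper (and as Serre). The paper does not prove this statement directly but packages the same ingredients as Theorems~\ref{thm:chebotarev_dense} and~\ref{thm:trace} and then uses them in the proof of the more general Theorem~\ref{thm:rigidity}; specialising that proof to $r=2$, $W_1=1$, $W_2=-1$ recovers precisely your argument, with your explicit open-set/finite-quotient Chebotarev step corresponding to the paper's appeal to density of Frobenius conjugacy classes in $\Cl(\Gamma_S)$.
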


As a first application, in \S\ref{s:top}, we consider consequences of the
local results obtained in this article to topological zeta
functions---the latter zeta functions arise as limits ``$p \to 1$'' of local
ones.
In particular, we provide a rigorous  justification for the process of deriving
topological zeta functions from suitably uniform $p$-adic ones.

Thanks to \cite{Vol10} and its subsequent applications in \cite{AKOV13,SV14},
various types of local zeta functions arising in asymptotic group and ring
theory are known to generically satisfy functional equations
under ``inversion of $p$'', an operation defined on the level of carefully
chosen explicit formulae.
As another application of the results in this article,
we show in~\S\ref{s:feqn} that such local functional equations are independent
of the chosen formulae whence they admit the expected interpretation for uniform
examples in the sense of~\cite[\S 1.2.4]{dSW08}.

\subsection*{Acknowledgement}
I would like to thank Christopher Voll for helpful discussions.

\subsection*{\textnormal{\textit{Notation}}}
We write $\NN = \{ 1,2,\dotsc\}$ and use the symbol ``$\subset$'' to indicate
not necessarily proper inclusion.
Throughout,  $k$ is always a number field with ring of integers $\fo$.
We write~$\Places_k$ for the set of non-Archimedean places of $k$.
Given $v\in \Places_k$, let $k_v$ be the $v$-adic completion of~$k$
and let $\RF_v$ be its residue field.
We let $q_v$ and $p_v$ denote the cardinality and characteristic of $\RF_v$, respectively.
Choose an algebraic closure $\bar k_v$ of $k_v$ and for $f\ge 1$, let
$k_v^{(f)} \subset \bar k_v$ denote the unramified extension of degree $f$ of $k_v$.
The residue field $\bar\RF_v$ of $\bar k_v$ is an algebraic closure of $\RF_v$.
Having fixed $\bar k_v$, we identify the
residue field $\RF_v^{(f)}$ of $k_v^{(f)}$ with the extension of degree
$f$ of $\RF_v$ within $\bar\RF_v$.
We let $\fO_K$ denote the valuation ring of a non-Archimedean local field $K$,
and we let $\fP_K$ denote the maximal ideal of $\fO_K$.
We write $q_K = \noof{(\fO_K/\fP_K)}$.

\section{Local zeta functions of groups, algebras, and modules}
\label{s:local_zetas}

We recall the definitions of the representation and subobject zeta functions
considered in this article on a formal level which disregards questions of
convergence (and finiteness).

\paragraph{Representation zeta functions.}
For further details on the following, we refer the reader to \cites{Klo13} and
\cite[\S 4]{Vol14}.
Let $G$ be a topological group. For $n\in \NN$, let $r_n(G)\in \NN \cup
\{0,\infty\}$ denote the number of equivalence classes of continuous irreducible
representations $G \to \GL_n(\CC)$.
The \emph{representation zeta function} of $G$ is
$\zeta_{G}^\irr(s) = \sum_{n=1}^\infty r_n(G)n^{-s}$.
Two continuous complex representations $\varrho$ and $\sigma$ of $G$ are
\emph{twist-equivalent} if $\varrho$ is equivalent to $\sigma\otimes_{\CC} \alpha$ for
a continuous $1$-dimensional complex representation $\alpha$ of $G$.
For $n\in \NN$, let $\tilde r_n(G)$ denote the number of twist-equivalence
classes of continuous irreducible representations $G \to \GL_n(\CC)$.
The \emph{twist-representation zeta function} of $G$ is 
$\zeta_{G}^{\wirr}(s) = \sum_{n=1}^\infty \tilde r_n(G)n^{-s}$.

\paragraph{Subobject zeta functions.}
The following types of zeta functions go back to \cites{GSS88,Sol77}, see
\cite{Vol14} for a recent survey;
we use the formalism from \cite[\S 2.1]{topzeta}.
Let $R$ be a commutative, unital, and associative ring.
Let $\A$ be a possibly non-associative $R$-algebra.
For $n\in \NN$, let $a_n^\le(\A)$ denote the number of
subalgebras $\mathsf U$ of $\A$ such that the $R$-module quotient $\mathsf
A/\mathsf U$ has cardinality $n$.
The \emph{subalgebra zeta function} of $\A$ is
$\zeta_{\A}^\le(s) = \sum_{n=1}^\infty a_n^{\le}(\A) n^{-s}$.
Similarly, let $\mathsf M$ be an $R$-module and let $\mathsf E$ be an
associative $R$-subalgebra of $\End_R(\mathsf M)$.
For $n\in \NN$, let $b_n(\mathsf E\acts \mathsf M)$ denote the number of
$(\mathsf E +  R 1_{\mathsf M})$-submodules $\mathsf U \subset \mathsf M$ 
such that the $R$-module quotient $\mathsf
M/\mathsf U$ has cardinality $n$.
The \emph{submodule zeta function} of $\mathsf E$ acting on $\mathsf M$ is
$\zeta_{\mathsf E \acts \mathsf M}(s) = \sum_{n=1}^\infty b_n(\mathsf E\acts
\mathsf M) n^{-s}$.
An important special case of a submodule zeta function is the \emph{ideal zeta function} 
of an $R$-algebra $\A$, defined by enumerating ($R$-)ideals of $\mathsf
A$ with finite $R$-module quotients.

A group-theoretic motivation for studying these zeta functions is given
in \cite[\S 4]{GSS88}: given a finitely generated nilpotent group
$G$, there exists a Lie ring $\mathsf L$ such that for almost all primes $p$, the
subalgebra (resp.~ideal) zeta function of $\mathsf L\otimes_{\ZZ}\ZZ_p$ 
enumerates precisely the subgroups (resp.~normal subgroups) of finite index of
$\hat G_p$, the pro-$p$ completion of $G$. 

\section{Stability under base extension for local maps of Denef type}
\label{s:local_maps}

Given a possibly non-associative $\ZZ$-algebra $\mathsf A$, the underlying
$\ZZ$-module of which is finitely generated, du~Sautoy and Grunewald
\cite{dSG00} established the existence of schemes $V_1,\dotsc,V_r$ and rational
functions $W_1,\dotsc,W_r \in \QQ(X,Y)$ such that 
for all primes $p\gg 0$, 
\begin{equation}
  \label{eq:denef}
  \zeta_{\mathsf A\otimes\ZZ_p}^{\le}(s) = \sum_{i=1}^r \noof{V_i(\FF_p)}\dtimes W_i(p,p^{-s}).
\end{equation}
In this section, we shall concern ourselves with formulae
of the same shape as \eqref{eq:denef}.
We will focus on the behaviour of such formulae under variation of $p$ as
well as under base extension.
As we will see, while the $V_i$ and $W_i$ in \eqref{eq:denef} are in no way
uniquely determined, valuable relations can be deduced from the validity 
of equations such as \eqref{eq:denef} alone.
To that end, we develop a formalism of ``local maps'' which, as we will
explain, specialises to various group-theoretic instances of local zeta functions.

\subsection{Local maps of Denef type}
\label{ss:denef_type}

The formalism developed in the following is closely related to the ``systems of
local zeta functions'' considered in \cite[\S 5.2]{topzeta}.
By a \emph{$k$-local map} in $m$ variables we mean a map
\[
\Zeta\colon \Places_k\setminus S_\Zeta \times \NN \to \QQ(Y_1,\dotsc,Y_m),
\]
where $S_\Zeta \subset \Places_k$ is finite;
in the following, the number $m$ will be fixed.

Local maps provide a convenient formalism for studying families of local zeta
functions as follows:
for $v\in \Places_k\setminus S_\Zeta$ and $f\in \NN$,
let $\hat\Zeta(v,f)$ denote the meromorphic function
$\Zeta(v,f)(q_v^{-fs_1},\dotsc,q_v^{-fs_m})$ in complex variables
$s_1,\dotsc,s_m$.
Note that given $(v,f)$, the functions $\Zeta(v,f)$ and $\hat\Zeta(v,f)$ determine each other.
Let $K$ be a non-Archimedean local field endowed with an embedding
$k\subset K$.
We may regard $K$ as a finite extension of $k_v$ for a unique $v\in
\Places_k$.
Let $f$ be the inertia degree of $K/k_v$.
Given a $k$-local map $\Zeta$,
if $v\not\in S_\Zeta$,
write $\Zeta_K := \Zeta(v,f)$ and $\hat\Zeta_K(s_1,\dotsc,s_m) := \hat\Zeta(v,f)$.

We say that two $k$-local maps $\Zeta$ and $\Zeta'$
are \emph{equivalent} if they coincide on $\Places_k \setminus S \times \NN$
for some finite $S \supset S_\Zeta \cup S_{\Zeta'}$.
We are usually only interested in local maps up to equivalence.
Let $V$ be a separated $\fo$-scheme of finite type and let $W \in
\QQ(X,Y_1,\dotsc,Y_m)$ be regular at $(q,Y_1,\dotsc,Y_m)$ for each integer $q > 1$. 
Define a $k$-local map
\[
[V\dtimes W]\colon \Places_k\times\NN\to\QQ(Y_1,\dotsc,Y_m),\quad
(v,f) \mapsto \noof V(\RF_v^{(f)}) \dtimes W(q_v^f,Y_1,\dotsc,Y_m).
\]
We say that a $k$-local map is of \emph{Denef type} if it is equivalent to a
finite (pointwise) sum of maps of the form $[V\dtimes W]$;
for a motivation of our terminology, see \S\ref{sss:igusa}.
Note that local maps of Denef type contain further information compared with
equations such as~\eqref{eq:denef}.
Namely, in addition to the variation of the prime, they also take into
account local base extensions.

\subsection{Main examples of local maps}
\label{ss:denef_examples}

We discuss the primary examples of $k$-local maps of Denef type of interest to
us.
These local maps will be constructed from an $\fo$-form
of a $k$-object and only be defined up to equivalence.
In the univariate case $m=1$, we simply write $Y = Y_1$ and $s = s_1$.
We also allow $m > 1$ since various types of univariate zeta functions in the
literature are most conveniently expressed and analysed as specialisations of multivariate
$p$-adic integrals.

\subsubsection{Generalised Igusa zeta functions \textnormal{(\cites{Den87,VZG08}; cf.~\cite[Ex.~5.11(ii),(vi)]{topzeta})}}
\label{sss:igusa}
Let $a_1,\dotsc,a_m \normal k[X_1,\dotsc,X_n]$ be non-zero ideals.
Write $\mathfrak a_i = a_i \cap \fo[X_1,\dotsc,X_n]$.
Established results from $p$-adic integration pioneered by Denef
show that there exists a $k$-local map
$\Zeta^{a_1,\dotsc,a_m}\colon \Places_k\setminus S \times \NN \to
\QQ(Y_1,\dotsc,Y_m)$ of Denef type such that
for all $v\in \Places_k\setminus S$, all finite extensions $K/k_v$, 
and all $s_1,\dotsc,s_m\in\CC$ with $\mathrm{Re}(s_1),\dotsc,\mathrm{Re}(s_m)\ge 0$,
we have
\[
\hat \Zeta^{a_1,\dotsc,a_m}_K(s_1,\dotsc,s_m) =
\int_{\fO_K^n} \norm{\mathfrak a_1(\xx)}^{s_1} \dotsb \norm{\mathfrak
  a_m(\xx)}^{s_m} \dd\mu_K(\xx),
\]
where $\mu_K$ denotes the normalised Haar measure on $K^n$
and $\norm{\dtimes}$ the usual maximum norm.
Note that the equivalence class of $\Zeta^{a_1,\dotsc,a_m}$ remains unchanged 
if we replace $\mathfrak a_i$ by another $\fo$-ideal with $k$-span $a_i$.

\subsubsection{Subalgebra and submodule zeta functions
  \textnormal{(\cite{dSG00}; cf.~\cite[Ex.~5.11(iii)]{topzeta})}}
\label{sss:subobjects}

Let $\cA$ be a not necessarily associative finite-dimensional $k$-algebra.
Choose any $\fo$-form $\A$ of $\cA$ which is finitely generated as an $\fo$-module.
By~\cite[Thm~1.4]{dSG00} (cf.~\cite[Thm~5.16]{topzeta}), 
there exists a $k$-local map $\Zeta^{\cA}\colon \Places_k\setminus S\times \NN\to\QQ(Y)$
of Denef type characterised
by
\[
\hat\Zeta^{\cA}_K(s) = \zeta_{\A\otimes_{\fo}\fO_K}^{\le}(s),
\]
where we regard $\A\otimes_{\fo}\fO_K$ as an $\fO_K$-algebra (so that we
enumerate $\fO_K$-subalgebras of finite index).
The equivalence class of $\Zeta^{\cA}$ only depends on $\cA$ and not
on $\A$.

\begin{rem}
  For a finite extension $K/k_v$, instead of regarding $\A \otimes_{\fo} \fO_K$
  as an $\fO_K$-algebra, we could also consider it as an algebra over $\fo_v$
  and enumerate its $\fo_v$-subalgebras or $\fo_v$-ideals.
  While the latter type of ``extension of scalars''~\cite[p.~188]{GSS88} 
  has the advantage of admitting a group-theoretic interpretation for nilpotent
  Lie rings, its effect on zeta functions is only understood in very few cases,
  see~\cite[Thm~3--4]{GSS88} and~\cites{SV15a,SV15b}.
\end{rem}

Let $M$ be a finite-dimensional vector space over $k$, and let $\cE$
be an associative subalgebra of $\End_k(M)$.
Choose an $\fo$-form $\mathsf M$ 
of $M$ which is finitely generated as an $\fo$-module
and an $\fo$-form $\mathsf E\subset \End_{\fo}(\mathsf M)$ of $\cE$.
Similarly to the case of algebras from above,
we obtain a $k$-local map $\Zeta^{\cE\acts M}\colon \Places_k\setminus S\times
\NN\to\QQ(Y)$ of Denef type with
\[
\hat\Zeta^{\cE\acts M}_K(s) = \zeta_{(\mathsf E\otimes_{\fo} \fO_K) \acts (\mathsf M\otimes_{\fo}\fO_K)}(s).
\]

\subsubsection{Representation zeta functions: unipotent groups \textnormal{(\cite{SV14})}}
\label{sss:unipotent}

Let $\GG$ be a unipotent algebraic group over $k$.
Choose an affine group scheme $\G$ of finite type over $\fo$ with $\G
\otimes_{\fo} k\approx_k \GG$.
There exists a finite set $S\subset\Places_k$ such that
$\G(\fO_K)$ is a finitely generated nilpotent pro-$p_v$ group for $v\in
\Places_k\setminus S$ and all finite extensions $K/k_v$.
By \cite[Pf of Thm~A]{SV14}, after enlarging $S$,
we obtain a $k$-local map $\Zeta^{\GG,\wirr}\colon \Places_k\setminus S\times
\NN\to\QQ(Y)$ of Denef type characterised by
$$\hat\Zeta^{\GG,\wirr}_K(s) = \zeta_{\G(\fO_K)}^{\wirr}(s);$$
the equivalence class of $\Zeta^{\GG,\wirr}$ only depends on $\GG$ and not on $\G$.

\subsubsection{Representation zeta functions: FAb $p$-adic analytic pro-$p$
  groups \textnormal{(\cite{AKOV13})}}
\label{sss:perfect}
Let $\bm\fg$ be a finite-dimensional perfect Lie $k$-algebra.
Choose an $\fo$-form $\fg$ of $\bm\fg$ which is finitely generated as an $\fo$-module.
As explained in \cite[\S 2.1]{AKOV13}, there exists a finite set
$S\subset\Places_k$ (including all the places which ramify over $\QQ$) such that
for each $v\in \Places_k\setminus S$ and $f\in \NN$, the space
$\G^1(\fo_v^{(f)}) := \fp_v^{(f)} (\fg \otimes_{\fo} \fo_v^{(f)})$ can be naturally endowed
with the structure of a FAb $p_v$-adic analytic pro-$p_v$ group;
here, $\fo_v^{(f)}$ denotes the valuation ring of $k_v^{(f)}$
and $\fp_v^{(f)}$ the maximal ideal of $\fo_v^{(f)}$.
By~\cite[\S\S 3--4]{AKOV13}, after further enlarging $S$, we obtain a map of Denef type
$\Zeta^{\bm\fg,\irr}\colon \Places_k\setminus S\times \NN\to\QQ(Y)$
with $$\hat\Zeta^{\bm\fg,\irr}_{k_v^{(f)}}(s)
=\zeta^{\irr}_{\G^1(\fo_v^{(f)})}(s),$$
the equivalence class of which only depends on $\bm \fg$.
Note that in contrast to the preceding examples, here we restrict attention
to (absolutely) unramified extensions $k_v^{(f)}/\QQ_{p_v}$;
by considering ``higher congruence subgroups'' $\G^m(\dtimes)$ as in
\cite[Thm~A]{AKOV13}, this restriction could be removed at the cost of a more
technical exposition.

\subsection{Main results}
\label{ss:main}

The following result, which we will prove in \S\ref{ss:pf_rigidity}, constitutes the
technical heart of this article.
Throughout, by the \emph{density} of a set of places or primes, we mean the
natural one as in \cite[\S 3.1.3]{Ser12}. 
As before, let $k$ be a number field with ring of integers $\fo$.

\begin{thm}
  \label{thm:rigidity}
  Let $V_1,\dotsc,V_r$ be separated $\fo$-schemes of finite type and let $W_1,\dotsc,W_r\in
  \QQ(X,Y_1,\dotsc,Y_m)$.
  Suppose that $(q,Y_1,\dotsc,Y_m)$ is a regular point of each
  $W_i$ for each integer $q > 1$.
  Let $P\subset \Places_k$ be a set of places of density $1$ 
  and suppose that   for all $v \in P$,
  $$\sum\limits_{i=1}^r \noof{V_i(\RF_v)} \dtimes W_i(q_v,Y_1,\dotsc,Y_m) = 0.$$
  Then there exists a finite set $S\subset \Places_k$ such that
  for all $v \in \Places_k\setminus S$ and all $f\in \NN$,
  $$\sum\limits_{i=1}^r \noof{V_i(\RF_v^{(f)})} \dtimes W_i(q_v^f,Y_1,\dotsc,Y_m) = 0.$$
\end{thm}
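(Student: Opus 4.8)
The plan is to decouple the geometric input---the point counts $\noof{V_i(\RF_v^{(f)})}$---from the combinatorial input, the rational functions $W_i$, thereby reducing the assertion to a comparison of numbers of rational points of auxiliary $\fo$-schemes, to which a density theorem in the spirit of \cite[Thm~1.3]{Ser12} applies. First I would clear denominators. Write $W_i = A_i/B_i$ in lowest terms in the unique factorisation domain $\QQ[X,Y_1,\dotsc,Y_m]$; the regularity hypothesis amounts to saying that $B_i(q,Y_1,\dotsc,Y_m)$ is a non-zero polynomial for every integer $q > 1$, so the same holds for $B := \prod_i B_i$, and, setting $\widetilde W_i := A_i\prod_{j\ne i}B_j \in \QQ[X,Y_1,\dotsc,Y_m]$, one has, for every integer $q > 1$ and all $c_1,\dotsc,c_r \in \QQ$,
\[
\sum_{i=1}^r c_i\, W_i(q,Y_1,\dotsc,Y_m) = 0
\quad\Longleftrightarrow\quad
\sum_{i=1}^r c_i\, \widetilde W_i(q,Y_1,\dotsc,Y_m) = 0 .
\]
Expanding each $\widetilde W_i = \sum_\alpha c_{i,\alpha}(X)\, Y^\alpha$ over a common finite set of monomials $Y^\alpha = Y_1^{\alpha_1}\dotsm Y_m^{\alpha_m}$, with $c_{i,\alpha} \in \QQ[X]$, and comparing coefficients, the hypothesis of the theorem becomes: for every $v \in P$ and every multi-index $\alpha$, $\ \sum_{i=1}^r \noof{V_i(\RF_v)}\, c_{i,\alpha}(q_v) = 0$.

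Next I would encode each of these identities geometrically. Fix $N \in \NN$ with $N c_{i,\alpha} \in \ZZ[X]$ for all $i,\alpha$, and write $N c_{i,\alpha}(X) = \sum_j a^+_{i,\alpha,j} X^j - \sum_j a^-_{i,\alpha,j} X^j$ with all $a^\pm_{i,\alpha,j} \in \NN \cup \{0\}$. For any finite residue field $\RF$ of an $\fo$-algebra one has $\noof{\AA^j_{\fo}(\RF)} = (\noof\RF)^j$, and point counts are multiplicative for fibre products over $\RF$, so $\noof{V_i(\RF)}\cdot(\noof\RF)^j = \noof{(V_i \times_{\fo}\AA^j_{\fo})(\RF)}$. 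Letting $\mathcal L_\alpha$ (respectively $\mathcal R_\alpha$) be the disjoint union, over all $i$ and $j$, of $a^+_{i,\alpha,j}$ (respectively $a^-_{i,\alpha,j}$) copies of the separated finite type $\fo$-scheme $V_i \times_{\fo}\AA^j_{\fo}$, the identity above---multiplied by $N$---is equivalent to $\noof{\mathcal L_\alpha(\RF_v)} = \noof{\mathcal R_\alpha(\RF_v)}$. Hence $\noof{\mathcal L_\alpha(\RF_v)} = \noof{\mathcal R_\alpha(\RF_v)}$ for all $v$ in the density-$1$ set $P$ and each of the finitely many $\alpha$.

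Now I would apply a density theorem. Although \cite[Thm~1.3]{Ser12} is stated over $\ZZ$, the analogous statement over $k$ holds by the same proof: by the Grothendieck--Lefschetz trace formula, $\noof{\mathcal L_\alpha(\RF_v^{(f)})}$ and $\noof{\mathcal R_\alpha(\RF_v^{(f)})}$ are alternating sums of traces of $\mathrm{Frob}_v^f$ on the compactly supported $\ell$-adic cohomology of $\mathcal L_\alpha$ and $\mathcal R_\alpha$; these cohomology groups are $\mathrm{Gal}(\bar k/k)$-representations unramified outside a finite set, and the hypothesis pins down the associated virtual Frobenius traces on a set of places of density $1$, so Chebotarev's density theorem together with the Brauer--Nesbitt argument---exactly as in \cite{Ser12}---forces the corresponding virtual representation to be trivial, yielding a finite set $S \subset \Places_k$ with $\noof{\mathcal L_\alpha(\RF_v^{(f)})} = \noof{\mathcal R_\alpha(\RF_v^{(f)})}$ for all $v \in \Places_k\setminus S$, all $f \in \NN$ and all $\alpha$. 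Unwinding the encoding: this says $\sum_i \noof{V_i(\RF_v^{(f)})}\, N c_{i,\alpha}(q_v^f) = 0$, hence $\sum_i \noof{V_i(\RF_v^{(f)})}\, c_{i,\alpha}(q_v^f) = 0$ for each $\alpha$; multiplying by $Y^\alpha$ and summing gives $\sum_i \noof{V_i(\RF_v^{(f)})}\, \widetilde W_i(q_v^f, Y_1,\dotsc,Y_m) = 0$; and since $q_v^f > 1$, the displayed equivalence with $q = q_v^f$ yields $\sum_i \noof{V_i(\RF_v^{(f)})}\, W_i(q_v^f, Y_1,\dotsc,Y_m) = 0$ for all $v \in \Places_k\setminus S$ and all $f \in \NN$, as desired. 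I expect the main obstacle to be precisely this encoding step---absorbing the polynomials $c_{i,\alpha}$, which may have negative and non-integral coefficients, into honest $\fo$-schemes in a way that keeps the identities equivalent for all $f$ simultaneously---together with the point, minor but essential, that \cite[Thm~1.3]{Ser12} must be invoked over $k$ rather than over $\ZZ$: a naive descent along $\Spec\fo \to \Spec\ZZ$ does not suffice, since it conflates the several places of $k$ above a single rational prime, so one reruns the $\ell$-adic and Chebotarev argument over $k$ directly.
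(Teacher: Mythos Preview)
Your proof is correct and follows essentially the same route as the paper: clear denominators, compare coefficients in the $Y$-variables to reduce to identities of the form $\sum_i \noof{V_i(\RF_v)}\,c_{i,\alpha}(q_v)=0$ with $c_{i,\alpha}\in\ZZ[X]$, and then conclude via the Grothendieck trace formula combined with Chebotarev over $k$. The only difference is packaging: where you split each $c_{i,\alpha}$ into positive and negative parts and build auxiliary schemes $\mathcal L_\alpha,\mathcal R_\alpha$ in order to invoke \cite[Thm~1.3]{Ser12} as a black box, the paper works directly in the ring of virtual $\ell$-adic characters of $\Gamma_S$, writing $\noof{V_i(\RF_v^{(f)})}=\gamma_i(g_v^f)$ and $q_v^f=\gamma_0(g_v^f)$ (with $\gamma_0$ coming from $\AA^1_{\fo}$), so that $\alpha:=\sum_i\gamma_i\,W_i(\gamma_0)$ is itself a virtual character vanishing on a dense set of Frobenii and hence identically zero---this sidesteps the positive/negative splitting entirely, since virtual characters already accommodate signs.
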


We now discuss important consequences of Theorem~\ref{thm:rigidity} for
local maps of Denef type.
The following implies the first two theorems stated in the introduction.

\begin{cor}
  \label{cor:rigidity}
  Let $\Zeta,\Zeta'$ be $k$-local maps of Denef type.
  Let $P\subset \Places_k\setminus (S_{\Zeta}\cup S_{\Zeta'})$ have density $1$
  and let $\Zeta(v,1) = \Zeta'(v,1)$ for all $v\in P$.
  Then $\Zeta$ and $\Zeta'$ are equivalent.
  That is, there exists a finite $S\supset S_\Zeta \cup S_{\Zeta'}$ with
  $\Zeta(v,f) = \Zeta'(v,f)$ for all $v\in \Places_k\setminus S$ and $f\in \NN$.
\end{cor}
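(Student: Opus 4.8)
The plan is to reduce Corollary~\ref{cor:rigidity} to Theorem~\ref{thm:rigidity} by taking differences. Since $\Zeta$ and $\Zeta'$ are of Denef type, after enlarging $S_\Zeta \cup S_{\Zeta'}$ to a finite set $S_0$ we may write, for all $v \in \Places_k \setminus S_0$ and all $f \in \NN$,
\[
\Zeta(v,f) = \sum_{i=1}^a \noof{V_i(\RF_v^{(f)})} \dtimes W_i(q_v^f, Y_1,\dotsc,Y_m), \qquad
\Zeta'(v,f) = \sum_{j=1}^b \noof{V_j'(\RF_v^{(f)})} \dtimes W_j'(q_v^f, Y_1,\dotsc,Y_m),
\]
where the $V_i, V_j'$ are separated $\fo$-schemes of finite type (possibly restricting them away from $S_0$) and the $W_i, W_j'$ are regular at $(q,Y_1,\dotsc,Y_m)$ for every integer $q>1$. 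Concatenating these lists and putting a minus sign in front of the $W_j'$, we obtain schemes $U_1,\dotsc,U_{a+b}$ and rational functions $T_1,\dotsc,T_{a+b} \in \QQ(X,Y_1,\dotsc,Y_m)$, all regular at the relevant points, such that $\Zeta(v,f) - \Zeta'(v,f) = \sum_{\ell=1}^{a+b} \noof{U_\ell(\RF_v^{(f)})} \dtimes T_\ell(q_v^f, Y_1,\dotsc,Y_m)$ for all $v \notin S_0$ and all $f$.

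Next I would apply the hypothesis. For $v \in P \setminus S_0$ we have $\Zeta(v,1) = \Zeta'(v,1)$, i.e.\ the sum $\sum_{\ell} \noof{U_\ell(\RF_v)} \dtimes T_\ell(q_v, Y_1,\dotsc,Y_m)$ vanishes in $\QQ(Y_1,\dotsc,Y_m)$. Since $P$ has density $1$ and $S_0$ is finite, $P \setminus S_0$ still has density $1$, so Theorem~\ref{thm:rigidity} applies with $V_i := U_i$, $W_i := T_i$ and the density-$1$ set $P \setminus S_0$. It yields a finite set $S_1 \subset \Places_k$ such that $\sum_{\ell} \noof{U_\ell(\RF_v^{(f)})} \dtimes T_\ell(q_v^f, Y_1,\dotsc,Y_m) = 0$ for all $v \notin S_1$ and all $f \in \NN$. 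Taking $S := S_0 \cup S_1$ (a finite set containing $S_\Zeta \cup S_{\Zeta'}$), this says precisely that $\Zeta(v,f) = \Zeta'(v,f)$ for all $v \in \Places_k \setminus S$ and all $f \in \NN$, which is the definition of $\Zeta$ and $\Zeta'$ being equivalent.

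The only genuinely substantive point is making sure the Denef-type data for $\Zeta$ and $\Zeta'$ can be packaged so as to meet the exact hypotheses of Theorem~\ref{thm:rigidity}: each $[V \dtimes W]$ summand contributes a separated $\fo$-scheme of finite type and a $W$ regular at all $(q,Y_1,\dotsc,Y_m)$ with $q>1$ integral, and by definition of equivalence we may pass to a common finite exceptional set; this is routine bookkeeping rather than a real obstacle. I do not expect any difficulty here, since Theorem~\ref{thm:rigidity} has been tailored exactly to drive this corollary, and in particular all the analytic input about $\ell$-adic representations and Chebotarev's theorem is hidden inside it. The statement that this corollary implies the first two theorems of the introduction then follows by specialising $\Zeta, \Zeta'$ to the local maps $\Zeta^{\GG,\wirr}, \Zeta^{\mathsf H,\wirr}$ from \S\ref{sss:unipotent} and unwinding the definitions of $\Zeta_K$ and $\hat\Zeta_K$.
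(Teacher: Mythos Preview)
Your proposal is correct and follows exactly the paper's approach: the paper's proof is the single line ``Apply Theorem~\ref{thm:rigidity} to the difference $\Zeta-\Zeta'$,'' and you have simply unpacked the bookkeeping behind that sentence.
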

\begin{proof}
  Apply Theorem~\ref{thm:rigidity} to the difference $\Zeta-\Zeta'$.
\end{proof}

\begin{ex}
Let
\[
\mathsf H(R) =
\begin{bmatrix}
  1 & R & R \\
    & 1 & R \\
    &   & 1
\end{bmatrix}
\le\GL_3(R)
\]
be the ``natural'' $\ZZ$-form of the Heisenberg group.
It has long been known \cite{NM89} that the global twist-representation zeta
function of $\mathsf H(\ZZ)$
satisfies $\zeta_{\mathsf H(\ZZ)}^{\wirr}(s) = \zeta(s-1)/\zeta(s)$.
For $\idx{k:\QQ}=2$, 
Ezzat \cite[Thm~1.1]{Ezz14} found that
$\zeta^{\wirr}_{\mathsf H(\fo)}(s) = \zeta_k(s-1)/\zeta_k(s)=\prod_{v\in
  \Places_k}(1-q_v^{-s})/(1-q_v^{1-s})$,
where $\zeta_k$ is the Dedekind zeta function of $k$, and he conjectured the
same formula  to hold for arbitrary number fields $k$.
This was proved by Stasinski and Voll as a very special case of
\cite[Thm~B]{SV14}.
Both approaches proceed by computing the corresponding local zeta functions
$\zeta_{\mathsf H(\fo_v)}(s)$ as indicated by the Euler product above.

Corollary~\ref{cor:rigidity} shows that, disregarding a finite number of
exceptional places, such a regular behaviour of local
representation zeta functions under base extension is a general phenomenon.
Indeed, knowing that $\zeta_{\mathsf H(\ZZ_p)}(s) = (1-p^{-s})/(1-p^{1-s})$ for
(almost) all primes~$p$, 
using Corollary~\ref{cor:rigidity} and the (deep) fact that
the $\Zeta^{\GG,\wirr}$ defined in \S\ref{sss:unipotent} are of Denef type,
we can immediately deduce that $\zeta_{\mathsf H(\fO_K)}(s) =
(1-q_K^{-s})/(1-q_K^{1-s})$ for almost all $p$ and all finite extensions
$K/\QQ_p$.
We note that the exclusion of finitely many places, unnecessary as it might be
in this particular case, is deeply ingrained in the techniques underpinning
the proof of Theorem~\ref{thm:rigidity} (but see Lemma~\ref{lem:good_reduction}).
\end{ex}

In analogy with \cite[\S 1.2.4]{dSW08}, we say that a 
$k$-local map $\Zeta$ is \emph{uniform} if it is equivalent to
$(v,f)\mapsto W(q_v^f,Y_1,\dotsc,Y_m)$ for a rational function $W\in
\QQ(X,Y_1,\dotsc,Y_m)$ which is regular at each point $(q,Y_1,\dotsc,Y_m)$ for
each integer $q > 1$.
We then say that $W$ \emph{uniformly represents}~$\Zeta$.
For a more literal analogue of the notion of uniformity in~\cite{dSW08} (which
goes back to \cite[\S 5]{GSS88}), we should only insist that $\Zeta(v,1) =
W(q_v,Y_1,\dotsc,Y_m)$ for almost all $v\in \Places_k$.
However, these two notions of uniformity actually coincide:

\begin{cor}
  \label{cor:uniform}
  Let $\Zeta$ be a $k$-local map of Denef type.
  Let $P\subset \Places_k\setminus S_\Zeta$ have density~$1$ and
  let $W\in \QQ(X,Y_1,\dotsc,Y_m)$ be regular at $(q,Y_1,\dotsc,Y_m)$ for all
  integers $q>1$.
  If $\Zeta(v,1) = W(q_v,Y_1,\dotsc,Y_m)$ for all $v\in P$,
  then $\Zeta$ is uniformly represented by $W$.
\end{cor}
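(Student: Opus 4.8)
The plan is to rephrase the hypothesis as an equality of two Denef-type local maps and then quote Corollary~\ref{cor:rigidity}. First I would introduce the auxiliary $k$-local map $\Zeta' := [\Spec\fo \dtimes W]$. Since $\noof{(\Spec\fo)(\RF_v^{(f)})} = 1$ for every $v \in \Places_k$ and every $f \in \NN$ (there is a unique $\fo$-morphism $\Spec\RF_v^{(f)} \to \Spec\fo$), and since the regularity assumption imposed on $W$ is exactly the condition appearing in the definition of $[\,\dtimes\,]$ in \S\ref{ss:denef_type}, the map $\Zeta'$ is a well-defined $k$-local map of Denef type with $S_{\Zeta'} = \emptyset$ and $\Zeta'(v,f) = W(q_v^f,Y_1,\dotsc,Y_m)$ for all $(v,f) \in \Places_k\times\NN$. (Any $\fo$-scheme whose $\RF_v^{(f)}$-point counts are $1$ for almost all $(v,f)$ would serve equally well; $\Spec\fo$ is merely the most economical choice.)

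With $\Zeta'$ so defined, the hypothesis $\Zeta(v,1) = W(q_v,Y_1,\dotsc,Y_m)$ for $v \in P$ becomes $\Zeta(v,1) = \Zeta'(v,1)$ for all $v$ in the density-$1$ set $P \subset \Places_k\setminus S_\Zeta = \Places_k\setminus(S_\Zeta\cup S_{\Zeta'})$. Corollary~\ref{cor:rigidity} then applies verbatim and produces a finite set $S \supset S_\Zeta$ with $\Zeta(v,f) = \Zeta'(v,f) = W(q_v^f,Y_1,\dotsc,Y_m)$ for all $v \in \Places_k\setminus S$ and all $f \in \NN$; in other words $\Zeta$ is equivalent to $(v,f)\mapsto W(q_v^f,Y_1,\dotsc,Y_m)$, which is precisely the assertion that $W$ uniformly represents $\Zeta$. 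I do not expect a genuine obstacle here: all of the arithmetic content has already been absorbed into Theorem~\ref{thm:rigidity} (through Corollary~\ref{cor:rigidity}), and the only points requiring any care are the elementary remark that a rational function regular at the relevant integer points is the same datum as a one-term Denef-type local map attached to the trivial scheme $\Spec\fo$, together with the routine bookkeeping of the finitely many excluded places.
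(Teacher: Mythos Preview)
Your proposal is correct and follows exactly the paper's approach: the paper's one-line proof simply says to apply Corollary~\ref{cor:rigidity} with $\Zeta'(v,f) = W(q_v^f,Y_1,\dotsc,Y_m)$, and your construction $\Zeta' = [\Spec\fo \dtimes W]$ is precisely the way to exhibit this $\Zeta'$ as a Denef-type local map.
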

\begin{proof}
  Apply Corollary~\ref{cor:rigidity} with $\Zeta'(v,f) = W(q_v^f,Y_1,\dotsc,Y_m)$.
\end{proof}

This in particular applies to a large number of examples of
uniform $p$-adic subalgebra and ideal zeta functions computed by Woodward
\cite{dSW08}.
Specifically, \cite[Ch.~2]{dSW08} contains numerous examples
of Lie rings $\mathsf L$ such that $\zeta^{\le}_{\mathsf L\otimes \ZZ_p}(s) =
W(p,p^{-s})$ for some $W(X,Y) \in \QQ(X,Y)$ and all rational
primes $p$ (or almost all of them);
the denominators of the rational functions $W(X,Y)$ are products of factors $1-X^aY^b$.
Corollary~\ref{cor:uniform} now shows that for almost all $p$ and all finite extensions
$K/\QQ_p$, we necessarily have
$\zeta^{\le}_{\mathsf L\otimes \fO_K}(s) =
W(q_K^{\phantom s},q_K^{-s})$. 
We note that since Woodward's computations of zeta functions are based on $p$-adic
integration, we expect them to immediately extend to finite extensions of
$\QQ_p$, thus rendering this application of Corollary~\ref{cor:uniform} unnecessary.
However, as Woodward provides few details on his computations
(see~\cite[Ch.~2]{Woo05}), the present author is unable to confirm this. 
It is perhaps a consequence of the group-theoretic origin of the subject
(see the final paragraph of~\S\ref{s:local_zetas}) 
that local base extensions, in the sense considered here, have
not been investigated for subalgebra and ideal zeta functions prior to~\cite{topzeta}.

\subsection{Proof of Theorem~\ref{thm:rigidity}}
\label{ss:pf_rigidity}

We first recall some facts on the subject of counting points on varieties
from \cite[Ch.~3--4]{Ser12}.
Let $G$ be a profinite group.
We let $\Cl(g)$ denote the conjugacy class of $g\in G$
and write $\Cl(G) := \{ \Cl(g) : g\in G\}$.
We may naturally regard $\Cl(G)$ as a profinite space by 
endowing it with the quotient topology or, equivalently, by
identifying $\Cl(G) = \varprojlim_{N\normal_o G} \Cl(G/N)$, where $N$ ranges
over the open normal subgroups of $G$ and each of the finite sets $\Cl(G/N)$ is
regarded as a discrete space.

As before, let $k$ be a number field with ring of integers $\fo$.
Fix an algebraic closure $\bar k$ of~$k$.
For a finite set $S\subset \Places_k$, 
let $\Gamma_S$ denote the Galois group of the
maximal extension of $k$ within $\bar k$ which is unramified outside of $S$.
For $v\in \Places_k\setminus S$, choose an element
$g_v \in \Gamma_S$ in the conjugacy class of geometric Frobenius
elements associated with $v$, see \cite[\S\S 4.4, 4.8.2]{Ser12}.
The following is a consequence of Chebotarev's density theorem.
\begin{thm}[{Cf.~\cite[Thm~6.7]{Ser12}}]
  \label{thm:chebotarev_dense}
  Let $P\subset \Places_k\setminus S$ have density $1$.
  Then $\{ \Cl(g_v) : v\in P\}$ is a dense subset of $\Cl(\Gamma_S)$.
\end{thm}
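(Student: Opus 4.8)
The plan is to reduce the assertion to finite Galois extensions and then feed it into the classical form of Chebotarev's density theorem. Recall that, as noted above, $\Cl(\Gamma_S) = \varprojlim_{N} \Cl(\Gamma_S/N)$, where $N$ ranges over the open normal subgroups of $\Gamma_S$ and each $\Cl(\Gamma_S/N)$ is finite and discrete. The projections $\pi_N\colon \Cl(\Gamma_S)\to\Cl(\Gamma_S/N)$ are continuous and surjective, and since their targets are finite discrete spaces, the collection of preimages $\pi_N^{-1}(c)$ with $N\normal_o\Gamma_S$ and $c\in\Cl(\Gamma_S/N)$ is a basis for the topology on $\Cl(\Gamma_S)$. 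Hence it suffices to prove that each such (necessarily nonempty) basic set contains $\Cl(g_v)$ for some $v\in P$.

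Fix $N\normal_o\Gamma_S$ and $c\in\Cl(\Gamma_S/N)$. The open normal subgroup $N$ cuts out a finite Galois extension $L/k$ inside the maximal extension of $k$ unramified outside $S$, and the canonical map identifies $\mathrm{Gal}(L/k)$ with $\Gamma_S/N$; in particular $L/k$ is unramified outside $S$. Under this identification, for $v\in\Places_k\setminus S$ the image of the geometric Frobenius $g_v\in\Gamma_S$ along $\Gamma_S\twoheadrightarrow\Gamma_S/N$ is a representative of the Frobenius conjugacy class $\mathrm{Frob}_{L/k}(v)$ of $v$ in $\mathrm{Gal}(L/k)$; equivalently, $\pi_N(\Cl(g_v)) = \mathrm{Frob}_{L/k}(v)$. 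Thus finding $v\in P$ with $\Cl(g_v)\in\pi_N^{-1}(c)$ amounts to finding $v\in P$ with $\mathrm{Frob}_{L/k}(v) = c$.

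This is exactly what a density count delivers. By Chebotarev's density theorem applied to $L/k$, the set $T_c := \{v\in\Places_k\setminus S : \mathrm{Frob}_{L/k}(v) = c\}$ has density $\noof{c}/\idx{\mathrm{Gal}(L/k)} > 0$, a conjugacy class being nonempty. Since $P$ has density $1$, its complement in $\Places_k$ has density $0$; a set of density $0$ cannot contain the positive-density set $T_c$, so $P\cap T_c\neq\emptyset$. Picking $v\in P\cap T_c$ yields $\pi_N(\Cl(g_v)) = c$, completing the argument.

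I expect the only delicate point to be the first paragraph: unwinding the topology on $\Cl(\Gamma_S)$ from the inverse limit and checking that the elements $g_v$ are compatible with the projections to finite quotients, i.e.\ that they reduce to the usual Frobenius classes at finite level. Once the conventions of \cite[\S\S 4.4, 4.8.2]{Ser12} are in place this is routine bookkeeping, and the arithmetic substance is carried entirely by Chebotarev's theorem, used here as a black box; so no real obstacle arises.
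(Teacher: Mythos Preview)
Your argument is correct and is the standard reduction of the profinite statement to the classical Chebotarev density theorem at finite level. Note, however, that the paper does not supply its own proof of this theorem: it simply records the statement as a consequence of Chebotarev's density theorem and refers to \cite[Thm~6.7]{Ser12}. Your write-up is precisely the argument one finds behind that citation, so there is nothing to compare against; your proposal would serve perfectly well as a spelled-out proof where the paper chose to cite.
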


Fix an arbitrary rational prime $\ell$.
Recall that a \emph{virtual $\ell$-adic character} of a profinite group
$G$ is a map $\alpha\colon G \to \QQ_\ell$ of the form $g \mapsto \sum_{i=1}^u
c_i \trace(\varrho_i(g))$, where $c_1,\dotsc,c_u\in \ZZ$ and each
$\varrho_i$ is a continuous homomorphism from $G$ into some
$\GL_{n_i}(\QQ_\ell)$.
Note that such a map $\alpha$ induces a continuous map $\Cl(G)\to \QQ_\ell$.
The set of virtual $\ell$-adic characters of $G$ forms a commutative ring
under pointwise operations.

The next result follows from
Grothendieck's trace formula \cite[Thm~4.2]{Ser12}
and the generic cohomological behaviour
of reduction modulo non-zero primes of $\fo$ \cite[Thm~4.13]{Ser12}.
\begin{thm}[{See \cite[Ch.~4]{Ser12} and cf.~\cite[\S\S 6.1.1--6.1.2]{Ser12}}]
  \label{thm:trace}
  Let $V$ be a separated $\fo$-scheme of finite type.
  Then there exist a finite set $S \subset \Places_k$ and a virtual $\ell$-adic character
  $\alpha$ of $\Gamma_S$ such that
  $\noof{V(\RF_v^{(f)})} = \alpha(g_v^f)$ for all $v\in \Places_k\setminus S$
  and all $f \in \NN$.
\end{thm}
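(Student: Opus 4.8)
\textbf{Proof plan for Theorem~\ref{thm:trace}.}
The plan is to reduce the statement to the two cited results of Serre---Grothendieck's trace formula \cite[Thm~4.2]{Ser12} and the generic good cohomological behaviour of reduction \cite[Thm~4.13]{Ser12}---after first cutting $V$ into pieces where everything is clean. First I would choose a finite set $S_0 \subset \Places_k$ large enough that $V$ extends to a separated scheme of finite type over the localisation $\fo_{S_0}$ of $\fo$ (inverting the primes in $S_0$ and all residue characteristics we wish to avoid) and so that, by noetherian induction, $V \otimes \fo_{S_0}$ admits a finite stratification into $\fo_{S_0}$-schemes each of which is smooth over $\fo_{S_0}$ (enlarging $S_0$ if necessary, using generic smoothness and spreading out). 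Since $\noof{V(\RF_v^{(f)})}$ is additive over such a stratification, it suffices to treat a single smooth piece, and I will simply rename that piece $V$.

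For such a smooth $V$ over $\fo_S$ (for suitable finite $S \supset S_0$) and the fixed prime $\ell$, the compactly supported $\ell$-adic cohomology sheaves $H^i_c(V_{\bar k}, \QQ_\ell)$ carry continuous actions of $\Gamma_S$ which are unramified outside $S$; this is precisely the content of \cite[Thm~4.13]{Ser12}, possibly after enlarging $S$ once more to ensure good reduction and constructibility of all the relevant sheaves. Define $\alpha(g) := \sum_i (-1)^i \trace\bigl(g \mathbin{\vert} H^i_c(V_{\bar k},\QQ_\ell)\bigr)$; since only finitely many $H^i_c$ are nonzero and each is finite-dimensional over $\QQ_\ell$, this is a virtual $\ell$-adic character of $\Gamma_S$ in the sense defined above (with the $c_i = (-1)^i$ and $\varrho_i$ the representations on $H^i_c$). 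Grothendieck's trace formula \cite[Thm~4.2]{Ser12}, applied over the residue field $\RF_v$ and its degree-$f$ extension $\RF_v^{(f)}$, identifies $\noof{V(\RF_v^{(f)})}$ with the alternating sum of traces of the $f$-th power of the geometric Frobenius $g_v$ on the same cohomology groups, i.e.\ with $\alpha(g_v^f)$, for every $v \in \Places_k \setminus S$ and every $f \in \NN$. This is the desired identity.

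Reassembling: taking $S$ to be the union of the finitely many $S$'s produced on the individual strata, and $\alpha$ the sum of the corresponding virtual characters (inflated to the common $\Gamma_S$), gives a single finite $S$ and a single virtual $\ell$-adic character of $\Gamma_S$ with $\noof{V(\RF_v^{(f)})} = \alpha(g_v^f)$ for all $v \notin S$ and all $f$, as required. The only genuine subtlety---and the step I expect to be the main obstacle---is bookkeeping the finitely many excluded places correctly: one must simultaneously invert enough primes to spread $V$ out over a regular base, to stratify into smooth pieces, to guarantee good reduction so that the cohomology is unramified and base-change holds, and to make the normalisations of geometric Frobenius consistent with those of \cite[\S\S 4.4, 4.8.2]{Ser12}. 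All of this is standard ``spreading out'' and is exactly what \cite[Ch.~4]{Ser12} is set up to handle, so in practice the proof is a careful citation of \cite[Thms~4.2 and 4.13]{Ser12} combined with additivity of point counts over a stratification; no new ideas are needed beyond the ones already present in \cite{Ser12}.
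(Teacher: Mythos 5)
Your proposal is correct and matches the paper's own (very terse) justification, which simply combines Grothendieck's trace formula \cite[Thm~4.2]{Ser12} with the generic good behaviour of cohomology under reduction \cite[Thm~4.13]{Ser12}; defining $\alpha$ as the alternating sum of traces on the $H^i_c$ is exactly the intended construction. The only remark is that your preliminary stratification into smooth pieces is superfluous: Deligne's generic base-change result as packaged in \cite[Thm~4.13]{Ser12} already applies to an arbitrary separated scheme of finite type, so you can invoke it directly without cutting $V$ up first.
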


Note that since $\noof{V(\RF_v)}\in \NN\cup\{0\}$ for all $v\in \Places_k$ and
$\{ \Cl(g_v) :v\in \Places_k\setminus S\}$ is dense in $\Cl(\Gamma_S)$,
the virtual character $\alpha$ in Theorem~\ref{thm:trace} is necessarily $\ZZ_\ell$-valued.

\begin{proof}[Proof of Theorem~\ref{thm:rigidity}]
  There exists a non-zero $D \in \ZZ[X,Y_1,\dotsc,Y_m]$ such that
  $D W_i\in \ZZ[X,Y_1,\dotsc,Y_m]$ for $i=1,\dotsc,r$
  and $D(q,Y_1,\dotsc,Y_m)\not= 0$ for integers $q > 1$.
  The proof of Theorem~\ref{thm:rigidity} is thus reduced to the case
  $W_1,\dotsc,W_r\in \ZZ[X,Y_1,\dotsc,Y_m]$.
  By considering the coefficients of each monomial $Y_1^{a_1}\dotsb
  Y_m^{a_m}$, we may assume that $W_1,\dotsc,W_r\in \ZZ[X]$.

  Let $V_0 := \AA^1_{\fo}$.
  By Theorem~\ref{thm:trace}, there exists a finite set $S\subset \Places_k$ and
  for $0\le i\le r$, a continuous virtual $\ell$-adic character
  $\gamma_i\colon \Gamma_S \to \ZZ_\ell$ with
  $\noof{V_i(\RF_{v}^{(f)})} = \gamma_i(g_v^f)$ 
  for all $v\in \Places_k\setminus S$ and $f\in \NN$.
  By construction, the virtual character $\alpha := \sum_{i=1}^r \gamma_i \dtimes
  W_i(\gamma_0)\colon \Gamma_{S} \to \ZZ_\ell$ then satisfies
  $\alpha(g_v) = 0$ for $v \in P\setminus S$ whence $\alpha = 0$ by
  Theorem~\ref{thm:chebotarev_dense}.
  The theorem follows by evaluating $\alpha$ at the $g_v^f$.
\end{proof}

\begin{lemma}
  \label{lem:good_reduction}
  In the setting of Theorem~\ref{thm:rigidity},
  suppose that each $V_i\otimes_{\fo} k$ is smooth and proper over $k$.
  Let $S'\subset \Places_k$ such that $v\in\Places_k\setminus S'$ if and only if
  $V_i\otimes \RF_v$ is smooth and proper over $\RF_v$ for each $i=1,\dotsc,r$.
  Then we may take $S = S'$ in Theorem~\ref{thm:rigidity}.
\end{lemma}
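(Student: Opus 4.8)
The plan is to follow the proof of Theorem~\ref{thm:rigidity} and to strengthen the single step in which the finite exceptional set is produced, namely the appeal to Theorem~\ref{thm:trace}. I would first observe that the two preliminary reductions in that proof---multiplying through by a suitable $D \in \ZZ[X,Y_1,\dotsc,Y_m]$ with $D(q,Y_1,\dotsc,Y_m) \ne 0$ for all integers $q > 1$, and then passing to the coefficient of each monomial $Y_1^{a_1}\dotsb Y_m^{a_m}$---only alter the rational functions $W_i$ (which afterwards lie in $\ZZ[X]$) and leave the schemes $V_1,\dotsc,V_r$ untouched; moreover, these reductions are reversible upon dividing again by $D(q_v^f,Y_1,\dotsc,Y_m) \ne 0$. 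Hence it suffices to treat the case $W_1,\dotsc,W_r \in \ZZ[X]$ and, in that case, to exhibit---after discarding only the places in $S'$ together with those above an auxiliary prime---a virtual $\ell$-adic character that computes the relevant alternating sums.

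To this end I would fix a rational prime $\ell$ and set $S_\ell := S' \cup \{ v \in \Places_k : p_v = \ell \}$. For $v \in \Places_k \setminus S_\ell$ the generic fibre $V_i \otimes k$ is smooth and proper over $k$ while the special fibre $V_i \otimes \RF_v$ is smooth and proper over $\RF_v$, so standard arguments with the smooth and proper base change theorem yield that the $\ell$-adic cohomology groups $H^j(V_{i,\bar k},\QQ_\ell)$ are unramified at $v$ and that Grothendieck's trace formula gives $\noof{V_i(\RF_v^{(f)})} = \sum_j (-1)^j \trace\bigl( g_v^f \mid H^j(V_{i,\bar k},\QQ_\ell) \bigr)$ for every $f \in \NN$; in other words, Theorem~\ref{thm:trace} holds for each $V_i$ with $S$ replaced by $S_\ell$. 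Since $q_v^f = \noof{\AA^1(\RF_v^{(f)})}$ is computed by the virtual $\ell$-adic character $g \mapsto \trace(g \mid \QQ_\ell(-1))$ of $\Gamma_{S_\ell}$---the $\ell$-adic cyclotomic character being unramified outside the places above $\ell$---, feeding these characters into the proof of Theorem~\ref{thm:rigidity} verbatim produces a virtual $\ell$-adic character $\alpha_\ell$ of $\Gamma_{S_\ell}$ with $\alpha_\ell(g_v) = 0$ for all $v$ in the density-$1$ set $P \setminus S_\ell$. By Theorem~\ref{thm:chebotarev_dense} and continuity $\alpha_\ell = 0$, and evaluating at the elements $g_v^f$ gives $\sum_{i=1}^r \noof{V_i(\RF_v^{(f)})} \dtimes W_i(q_v^f,Y_1,\dotsc,Y_m) = 0$ for all $v \in \Places_k \setminus S_\ell$ and all $f \in \NN$.

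It remains to remove the places above $\ell$, which I would do by letting $\ell$ vary: given any $v_0 \in \Places_k \setminus S'$, choose a rational prime $\ell \ne p_{v_0}$, so that $v_0 \notin S_\ell$; the previous paragraph then forces the alternating sum to vanish at $v_0$ for every $f \in \NN$. As $v_0 \in \Places_k \setminus S'$ was arbitrary, this says precisely that one may take $S = S'$ in Theorem~\ref{thm:rigidity}.

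The main obstacle is exactly this dependence on the auxiliary prime $\ell$: for a fixed $\ell$ the $\ell$-adic formalism of \cite[Ch.~4]{Ser12} genuinely cannot reach the places of $k$ above $\ell$, since Grothendieck's trace formula with $\QQ_\ell$-coefficients requires $\ell$ to differ from the residue characteristic, so those places are unavoidably lost for each individual $\ell$; they are recovered only by intersecting the exceptional sets $S_\ell$ over all $\ell$, the intersection being $S'$. The remaining point requiring care is the use of the smooth and proper base change theorem to pin down the exceptional set of Theorem~\ref{thm:trace} as $S'$ (up to the places above $\ell$) under the smoothness and properness hypotheses on the $V_i \otimes k$; cf.~the treatment of good reduction in \cite[Ch.~4]{Ser12}.
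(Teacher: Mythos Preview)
Your proposal is correct and follows essentially the same approach as the paper: for each auxiliary prime $\ell$ one shows, via smooth and proper base change (the paper cites \cite[\S 4.8.4]{Ser12} for this), that the virtual $\ell$-adic character in the proof of Theorem~\ref{thm:rigidity} is defined on $\Gamma_{S_\ell}$ with $S_\ell = S' \cup \{v : p_v = \ell\}$, and then one intersects over all $\ell$. Your treatment is more explicit than the paper's two-line argument---in particular your separate handling of $V_0 = \AA^1_{\fo}$ via the cyclotomic character, which is needed since $\AA^1$ is not proper---but the strategy is the same.
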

\begin{proof}
  By the proof of Theorem~\ref{thm:rigidity} and \cite[\S 4.8.4]{Ser12},
  for every rational prime $\ell$, we may take
  $S = S'\cup\{ v\in \Places_k :  p_v = \ell \}$ in Theorem~\ref{thm:rigidity}.
  The claim follows since $\ell$ is arbitrary.
\end{proof}

\section{Application: topological zeta functions and \texorpdfstring{$p$}{p}-adic formulae}
\label{s:top}

Denef and Loeser introduced topological zeta functions as singularity invariants
associated with polynomials.
These zeta functions were first obtained arithmetically using
an $\ell$-adic limit ``$p \to 1$'' of Igusa's $p$-adic zeta functions, see \cite{DL92}.
For a geometric approach using motivic integration, see \cite{DL98}.
Based on the motivic point of view, du~Sautoy and Loeser introduced topological
subalgebra zeta functions. 
In this section, by combining the arithmetic point of view and
Theorem~\ref{thm:rigidity}, we show that $p$-adic formulae alone determine
associated topological zeta functions, and we discuss consequences.

We first recall the formalism from \cite[\S 5]{topzeta} in the version from \cite[\S 3.1]{unipotent}.
Thus, for $e \in \QQ[s_1,\dotsc,s_m]$, the expansion
$
X^e :=
\sum_{d=0}^\infty \binom e d (X-1)^d
\in \QQ[s_1,\dotsc,s_m]\llb X-1\rrb
$
gives rise to an embedding $h \mapsto h(X,X^{-s_1},\dotsc,X^{-s_m})$
of $\QQ(X,Y_1,\dotsc,Y_m)$ into the field $\QQ(s_1,\dotsc,s_m)\llp X-1\rrp$.
Let $\MM[X,Y_1,\dotsc,Y_m]$ denote the subalgebra of $\QQ(X,Y_1,\dotsc,Y_m)$
consisting of those $W = g/h$ with
$W(X,X^{-s_1},\dotsc,X^{-s_m})\in \QQ(s_1,\dotsc,s_m)\llb X-1\rrb$,
where $g\in \QQ[X^{\pm 1},Y_1^{\pm 1},\dotsc,Y_m^{\pm 1}]$ and $h$ is a finite
product of non-zero ``cyclotomic factors''
of the form $1-X^a Y_1^{b_1}\dotsb Y_m^{b_m}$ for
$a,b_1,\dotsc,b_m\in \ZZ$.
Given $W\in \MM[X,Y_1,\dotsc,Y_m]$, write $$\red W :=
W(X,X^{-s_1},\dotsc,X^{-s_m}) \bmod {(X-1)} \in \QQ(s_1,\dotsc,s_m).$$

We say that a $k$-local map $\Zeta$ in $m$ variables is \emph{expandable} if it is
equivalent to a finite sum of maps 
of the form $[V\dtimes W]$, where $V$ is a separated $\fo$-scheme of finite type and $W\in
\MM[X,Y_1,\dotsc,Y_m]$;
we chose the term ``expandable'' to indicate that each $W$ admits a symbolic
power series expansion in $X-1$.

\begin{ex}
  \label{ex:expandable}
  \quad
  \begin{enumerate}
  \item
    The local maps of the form
    $\Zeta^{a_1,\dotsc,a_m}$, $\Zeta^{\GG,\wirr}$, and $\Zeta^{\bm\fg,\irr}$  from
    \S\ref{ss:denef_examples} are expandable,
    see \cite[\S\S 3.2--3.3]{unipotent} and
    cf.~\cite[Ex.~5.11(i)--(ii)]{topzeta}.
  \item
    \label{ex:expandable2}
    If $\cA$ and $M$ from \S\ref{sss:subobjects} have $k$-dimension $d$, then
    the local maps 
    $(1-X^{-1})^d \Zeta^{\cA}$ and $(1-X^{-1})^d \Zeta^{\cE\acts M}$
    (pointwise products) are both expandable by \cite[Thm~5.16]{topzeta}.
    
    The factor $(1-X^{-1})^d$ is included to ensure expandability.
    For example, if the multiplication $\cA\otimes_k\cA \to \cA$ is zero,
    then $\Zeta^{\cA}$ is uniformly represented by $W(X,Y) :=
    1/((1-Y)(1-XY)\dotsb(1-X^{d-1}Y))$;
    this reflects the well-known fact that the zeta function enumerating the open
    subgroups of $\ZZ_p^d$ is $1/\prod_{i=0}^{d-1}(1-p^{i-s})$.
    Hence,
    \[
    W(X,X^{-s}) = \frac{1}{s(s-1)\dotsb(s-(d-1))} (X-1)^{-d} + \dotsb \in
    \QQ(s)\llp X-1\rrp
    \]
    which is not a power series in $X-1$ for $d > 0$.
    In contrast, $W' := (1-X^{-1})^d W(X,Y)$ belongs to $\MM[X,Y]$ and $\red{W'} = 1/(s(s-1)\dotsb(s-(d-1)))$.
  \end{enumerate}
\end{ex}

If $V$ is a $k$-variety, then any embedding $k \incl \CC$ allows us to regard
$V(\CC)$ as a $\CC$-analytic space.
Cohomological comparison theorems (see e.g.~\cite{Kat94}) show that the
topological Euler characteristic $\Euler(V(\CC))$ does not depend on the chosen embedding $k \incl \CC$.
The following formalises fundamental insights from \cite{DL92}.

\begin{thm}[{Cf.~\cite[Thm~5.12]{topzeta}}]
  \label{thm:pre_rigidity}
  Let $V_1,\dotsc,V_r$ be separated $\fo$-schemes of finite type,
  let $W_1,\dotsc,W_r\in \MM[X,Y_1,\dotsc,Y_m]$, and let $S\subset \Places_k$ be finite.
  Suppose that $$\sum\limits_{i=1}^r \noof{V_i(\RF_v^{(f)})} \dtimes W_i(q_v^f,Y_1,\dotsc,Y_m) = 0$$
  for all $v\in \Places_k\setminus S$  and $f\in \NN$.
  Then $$\sum\limits_{i=1}^r \Euler(V_i(\CC)) \dtimes \red{W_i} = 0.$$
\end{thm}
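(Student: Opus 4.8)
The plan is to pass from the $p$-adic identity to the topological one by letting ``$p \to 1$'' in the spirit of Denef--Loeser. The key observation is that the hypothesis is an identity of maps $\Places_k \setminus S \to \QQ(Y_1,\dotsc,Y_m)$ that, after substituting $Y_j = q_v^{-f s_j}$, holds on the nose; and since each $W_i$ lies in $\MM[X,Y_1,\dotsc,Y_m]$, the substitution $Y_j = X^{-s_j}$ followed by expansion at $X = 1$ is a ring-homomorphic operation. First I would fix a single place $v \in \Places_k \setminus S$ and let $q = q_v$; for every $f \in \NN$ the hypothesis gives
\[
\sum_{i=1}^r \noof{V_i(\RF_v^{(f)})} \dtimes W_i(q^f, q^{-f s_1}, \dotsc, q^{-f s_m}) = 0
\]
as an identity of meromorphic functions in $s_1,\dotsc,s_m$. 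The point-counts $\noof{V_i(\RF_v^{(f)})}$, as functions of $f$, are governed by the eigenvalues of geometric Frobenius on $\ell$-adic cohomology: by Grothendieck's trace formula there are algebraic numbers $\pi_{ij}$ and signs $\epsilon_{ij} \in \{\pm 1\}$ with $\noof{V_i(\RF_v^{(f)})} = \sum_j \epsilon_{ij}\, \pi_{ij}^f$, and $\Euler(V_i(\CC)) = \sum_j \epsilon_{ij}$ is recovered as the ``value at $\pi_{ij} = 1$'', i.e.\ by formally setting every Frobenius eigenvalue equal to $1$. This is exactly the ``$p\to 1$'' limit.

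The mechanism for making this limit rigorous is the one from \cite{DL92} as packaged in \cite[Thm~5.12]{topzeta}: one works in the ring $\QQ(s_1,\dotsc,s_m)\llp X-1\rrp$. For an indeterminate prime power, replace $q$ by the formal variable $X$; then $\noof{V_i(\RF_v^{(f)})}$ should be replaced by the ``motivic'' count, but more simply, the identity above together with the fact that it holds for infinitely many genuine prime powers $q^f$ (running over all $f$, for our fixed $q>1$) forces the corresponding identity of power series in $X-1$ after the substitution $Y_j \mapsto X^{-s_j}$. Concretely: the left-hand side, viewed through $W_i \mapsto W_i(X, X^{-s_1},\dotsc,X^{-s_m}) \in \QQ(s_1,\dotsc,s_m)\llb X-1\rrb$ and with $\noof{V_i(\RF_v^{(f)})}$ replaced by its Frobenius-eigenvalue expansion evaluated along $X$, is a rational function in $X$ (with coefficients in $\QQ(s_1,\dotsc,s_m)$, after clearing the cyclotomic denominators, which are nonzero at the relevant prime powers) that vanishes at the infinitely many points $X = q^f$; hence it vanishes identically. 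Then I would reduce modulo $(X-1)$: the definition of $\red{W_i}$ is precisely $W_i(X,X^{-s_1},\dotsc,X^{-s_m}) \bmod (X-1)$, and reducing the Frobenius-eigenvalue expansion of $\noof{V_i(\RF_v^{(f)})}$ modulo the analogous degeneration sends it to $\Euler(V_i(\CC))$. This yields $\sum_i \Euler(V_i(\CC)) \dtimes \red{W_i} = 0$.

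The main obstacle is bookkeeping the degeneration $X \to 1$ consistently on both factors of each summand $\noof{V_i(\RF_v^{(f)})} \dtimes W_i$ while staying inside the ring where the reduction $\red{(\,\cdot\,)}$ is defined — i.e.\ making sure the cyclotomic denominators $1 - X^a Y_1^{b_1}\dotsb Y_m^{b_m}$ of the $W_i$, which become genuine poles at $X=1$ after $Y_j \mapsto X^{-s_j}$, interact correctly with the vanishing orders, exactly as in the passage from $W$ to $\red W$ recalled before Example~\ref{ex:expandable}. The clean way to handle this is to invoke \cite[Thm~5.12]{topzeta} essentially verbatim: that result already establishes the comparison between the $p$-adic specialisation of a sum $\sum_i [V_i \dtimes W_i]$ with $W_i \in \MM[X,Y_1,\dotsc,Y_m]$ and its topological counterpart $\sum_i \Euler(V_i(\CC)) \dtimes \red{W_i}$, so that an identity of the former forces the identity of the latter. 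Thus the proof is really: restrict the hypothesis to a single place and all $f$, feed the resulting family of $p$-adic identities into the Denef--Loeser limit formalism of \cite[\S 5]{topzeta}, and read off the conclusion.
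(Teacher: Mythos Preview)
Your proposal is correct and matches the paper's treatment: the paper does not supply its own proof here but simply records the statement as a reformulation of \cite[Thm~5.12]{topzeta} (with the Denef--Loeser heuristic ``$p\to 1$'' from \cite{DL92} as motivation), and you likewise end by invoking that result verbatim. Your additional sketch of the Frobenius-eigenvalue/$\ell$-adic interpolation mechanism is accurate in spirit, though as you yourself note the step ``replace $q^f$ by $X$'' on the point-count side is where the real work lies and is precisely what \cite[Thm~5.12]{topzeta} packages.
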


Let $\Zeta$ be an expandable $k$-local map so that
$\Zeta$ is equivalent to a sum
$[V_1 \dtimes W_1] + \dotsb + [V_r \dtimes W_r]$, where $V_1,\dotsc,V_r$ are
separated $\fo$-schemes of finite type and $W_1,\dotsc,W_r\in
\MM[X,Y_1,\dotsc,Y_m]$.
By Theorem~\ref{thm:pre_rigidity}, we may unambiguously define the
\emph{topological zeta function} $\Zeta_{\topo} \in \QQ(s_1,\dotsc,s_m)$
associated with $\Zeta$ via
\[
\Zeta_{\topo} := \sum_{i=1}^r \Euler(V_i(\CC))\dtimes \red{W_i}.
\]
By applying this definition to the expandable local maps in
Example~\ref{ex:expandable}, topological versions of the zeta
functions from \S\ref{ss:denef_examples} are defined;
the topological zeta function associated with a polynomial $f\in
k[X_1,\dotsc,X_n]$ of Denef and Loeser is of course obtained as a special case;
the same is true of the topological subalgebra
zeta functions of du~Sautoy and Loeser, see \cite[Rk~5.18]{topzeta}.

\begin{cor}
  \label{cor:top_rigidity}
  Let $\Zeta$ and $\Zeta'$ be $k$-local maps of Denef type.
  Let $P\subset \Places_k\setminus(S_{\Zeta}\cup S_{\Zeta'})$ have density $1$
  and suppose that $\Zeta(v,1) = \Zeta'(v,1)$ for all $v\in P$.
  If $\Zeta$ is expandable, then so is $\Zeta'$ and
  $\Zeta^{\phantom\prime}_{\topo} = \Zeta'_{\topo}$.
\end{cor}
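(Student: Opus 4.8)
The plan is to deduce everything formally from Corollary~\ref{cor:rigidity}, together with the fact that both expandability and the associated topological zeta function are invariants of the equivalence class of a $k$-local map. First I would apply Corollary~\ref{cor:rigidity} directly: since $\Zeta$ and $\Zeta'$ are $k$-local maps of Denef type, $P\subset\Places_k\setminus(S_\Zeta\cup S_{\Zeta'})$ has density~$1$, and $\Zeta(v,1)=\Zeta'(v,1)$ for all $v\in P$, the corollary yields that $\Zeta$ and $\Zeta'$ are equivalent, i.e.\ there is a finite $S\supset S_\Zeta\cup S_{\Zeta'}$ with $\Zeta(v,f)=\Zeta'(v,f)$ for all $v\in\Places_k\setminus S$ and all $f\in\NN$.

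Next, using that $\Zeta$ is expandable, I would fix a representation $\Zeta\sim[V_1\dtimes W_1]+\dotsb+[V_r\dtimes W_r]$ in which the $V_i$ are separated $\fo$-schemes of finite type and each $W_i\in\MM[X,Y_1,\dotsc,Y_m]$. Since equivalence of $k$-local maps is transitive and $\Zeta\sim\Zeta'$, this same finite sum also represents $\Zeta'$; hence $\Zeta'$ is expandable by definition, and moreover the value $\Zeta'_{\topo}$ may be computed from this particular representation. Because Theorem~\ref{thm:pre_rigidity} ensures that the quantity $\sum_i\Euler(V_i(\CC))\dtimes\red{W_i}$ depends only on the equivalence class of the sum and not on the chosen representation, I obtain
\[
\Zeta'_{\topo}=\sum_{i=1}^r\Euler(V_i(\CC))\dtimes\red{W_i}=\Zeta_{\topo},
\]
which is the assertion.

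I do not anticipate a genuine obstacle here: the entire content has already been packaged into Corollary~\ref{cor:rigidity} (hence ultimately Theorem~\ref{thm:rigidity}, whose proof via $\ell$-adic characters and Chebotarev is the substantive input) and into the well-definedness of $\Zeta_{\topo}$ furnished by Theorem~\ref{thm:pre_rigidity}. The only point requiring a moment's attention is bookkeeping with the finite exceptional sets---one must record that the $S$ produced by Corollary~\ref{cor:rigidity} is absorbed into the finite set witnessing the equivalence $\Zeta'\sim[V_1\dtimes W_1]+\dotsb+[V_r\dtimes W_r]$, so that ``expandable'' and the defining formula for the topological zeta function are both being read off from one and the same finite sum of maps of the form $[V\dtimes W]$.
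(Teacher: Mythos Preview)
Your proposal is correct and follows exactly the paper's own argument: the paper's proof consists of the single sentence ``Combine Corollary~\ref{cor:rigidity} and Theorem~\ref{thm:pre_rigidity},'' which is precisely what you have unpacked. Your additional remarks about bookkeeping with the exceptional sets and transitivity of equivalence are accurate and make explicit what the paper leaves implicit.
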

\begin{proof}
  Combine Corollary~\ref{cor:rigidity} and Theorem~\ref{thm:pre_rigidity}.
\end{proof}

For instance, given $\ZZ$-forms $\A$ and $\mathsf B$ of two $\QQ$-algebras
$\cA$ and $\mathcal B$ of the same dimension, if $\zeta^{\le}_{\A\otimes \ZZ_p}(s) = \zeta^{\le}_{\mathsf B\otimes \ZZ_p}(s)$ for almost all $p$, then the topological
subalgebra zeta functions of $\A$ and $\mathsf B$ necessarily coincide;
we note that the assumption on the dimensions could be dropped if
\cite[Conj.~I]{topzeta} was known to true.
This fact seems to suggest an advantage of the original arithmetic definition
of topological zeta functions via explicit $p$-adic formulae compared with 
specialisations of motivic zeta functions
as in~\cite{DL98}---indeed, it is not presently known if $p$-adic identities
imply an equality of associated motivic zeta functions, see 
\cite[\S\S 7.11--7.14]{dSL04}.
It is therefore presently conceivable
that $\Zeta^{\cA}$ and $\Zeta^{\mathcal B}$ might be equivalent even though the
motivic subalgebra zeta functions of $\cA$ and $\mathcal B$ differ.

In the introductions to his previous articles on the subject
\cites{topzeta,topzeta2,unipotent}, the author informally ``read off''
topological zeta functions from $p$-adic ones as the constant term as a
series in $p-1$ (taking into account correction factors as in
Example~\ref{ex:expandable}(\ref{ex:expandable2})).
The informal nature was due to some of the $p$-adic formulae used, in particular
those from \cite{dSW08}, only being known under variation of $p$ but not under
base extension, see the final paragraph of \S\ref{ss:main}.
By combining Corollary~\ref{cor:top_rigidity} and the following technical lemma, we may
now conclude that this informal approach for deducing topological subobject
zeta functions from suitably uniform $p$-adic formulae is fully rigorous.
In particular, Woodward's many examples of uniform subalgebra and ideal zeta
functions immediately provide us with knowledge of the corresponding topological
zeta functions.

\begin{lemma}
  \label{lem:auto_M}
  Let $W = g(X,Y_1,\dotsc,Y_m) /\prod_{i\in I}(1-X^{a_i}Y_1^{b_{i1}}\dotsb
  Y_m^{b_{im}}) \in \QQ(X,Y_1,\dotsc,Y_m)$
  for a Laurent polynomial $g \in \QQ[X^{\pm 1},Y_1^{\pm 1},\dotsc,Y_m^{\pm 1}]$,
  a finite set $I$, and integers $a_i,b_{ij}$ with
  $(a_i,b_{i1},\dotsc,b_{im})\not = 0$ for $i\in I$.
  Let $\Zeta$ be an expandable $k$-local map
  which is uniformly represented by $W$.
  Then $W\in \MM[X,Y_1,\dotsc,Y_m]$ and thus $\Zeta_{\topo} = \red W$.
\end{lemma}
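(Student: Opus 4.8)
The hypothesis gives us two descriptions of one rational function: $W \in \QQ(X,Y_1,\dotsc,Y_m)$ has an explicitly given cyclotomic denominator, and $[V \dtimes W] \sim [\text{sum defining } \Zeta]$ where $\Zeta$ is expandable, hence equivalent to $\sum_{j} [V_j \dtimes W_j]$ with $W_j \in \MM[X,Y_1,\dotsc,Y_m]$. The goal is purely algebraic: show $W$ itself lies in $\MM[X,Y_1,\dotsc,Y_m]$, i.e.\ that $W(X,X^{-s_1},\dotsc,X^{-s_m})$ is a power series in $X-1$ (no pole). Once this is known, the formula $\Zeta_{\topo} = \red W$ is immediate from the definition of $\Zeta_{\topo}$: taking $\Zeta' := [\AA^1_\fo \dtimes 1] \dtimes \dotsb$ — more precisely, apply Theorem~\ref{thm:pre_rigidity} to the difference of the two expandable presentations of $\Zeta$ (the uniform one $[V \dtimes W]$ once $W$ is in $\MM$, with $V$ a point so $\Euler = 1$; and the given sum $\sum_j [V_j \dtimes W_j]$), which forces $\red W = \sum_j \Euler(V_j(\CC)) \red{W_j} = \Zeta_{\topo}$.

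**First step: transfer the power-series property along the equivalence.** Since $\Zeta$ is uniformly represented by $W$ and also equivalent to $\sum_j [V_j \dtimes W_j]$, for almost all $v \in \Places_k$ and all $f \in \NN$ we have $W(q_v^f, Y_1,\dotsc,Y_m) = \sum_j \noof{V_j(\RF_v^{(f)})} W_j(q_v^f,Y_1,\dotsc,Y_m)$, an identity of rational functions in $Y_1,\dotsc,Y_m$. Fix an auxiliary prime $\ell$ and use Theorem~\ref{thm:trace} to write $\noof{V_j(\RF_v^{(f)})} = \gamma_j(g_v^f)$ for a virtual $\ell$-adic character $\gamma_j$ of $\Gamma_S$; by Theorem~\ref{thm:chebotarev_dense} the $g_v^f$ are (topologically) plentiful. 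I want to conclude that, substituting $Y_i = X^{-s_i}$ and expanding in powers of $X-1$, the negative-degree coefficients of the left side vanish because each summand on the right side, lying in $\MM[X,Y_1,\dotsc,Y_m]$, contributes only nonnegative powers — and the point counts $\gamma_j(g_v^f) \in \ZZ_{\ge 0}$ are "constant in $X$", so they do not interfere with the $X$-adic valuation. Concretely: expand $W_j(X,X^{-s_1},\dotsc,X^{-s_m}) \in \QQ(s_\bullet)\llb X-1\rrb$ and $W(X,X^{-s_1},\dotsc,X^{-s_m}) \in \QQ(s_\bullet)\llp X-1\rrp$; the identity says that for each $X = q_v^f$ the Laurent tail of $W$ is a $\ZZ_{\ge 0}$-combination of power series, hence the principal part of $W$ (a fixed finite-tailed Laurent series with coefficients in $\QQ(s_\bullet)$) vanishes after specialising $X-1$ to infinitely many values $q_v^f - 1$ accumulating at $0$ — forcing those coefficients to be zero. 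That gives $W \in \MM[X,Y_1,\dotsc,Y_m]$.

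**The main obstacle.** The delicate point is making rigorous the interchange of the two expansions — "substitute $Y_i = X^{-s_i}$" and "set $X = q_v^f$" — so that a statement about vanishing of finitely many Laurent coefficients of $W(X,X^{-\bm s})$ as elements of $\QQ(s_1,\dotsc,s_m)$ genuinely follows from infinitely many numerical identities. The clean way is to work with the order of vanishing at $X=1$: define $\nu(W)$ to be the $(X-1)$-adic valuation of $W(X,X^{-s_1},\dotsc,X^{-s_m})$ in $\QQ(s_\bullet)\llp X-1 \rrp$. Each $W_j$ has $\nu(W_j) \ge 0$; the point counts $\gamma_j(g_v^f)$ are nonzero integers in general, so $\nu$ of each term $\gamma_j(g_v^f)\, W_j$ is $\ge 0$. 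The issue is cancellation: a $\ZZ_{\ge0}$-linear combination of power series is a power series with no cancellation problem at all — the coefficient of $(X-1)^d$ in the sum is the corresponding sum of coefficients, and since the individual ones are power series this is automatically a power series; hence $\nu$ of the right-hand side is $\ge 0$, so $\nu(W) \ge 0$, which is exactly $W \in \MM[X,Y_1,\dotsc,Y_m]$. So in fact there is no real obstacle beyond bookkeeping: the nonnegativity of point counts is only needed to rule out "$0 \cdot (\text{pole})$" artifacts, and since we may harmlessly enlarge $S$ to discard the finitely many $V_j$ that are empty, or simply note that $\nu$ of the sum is $\min_j \nu(\gamma_j(g_v^f) W_j) \ge 0$ whenever all coefficients $\gamma_j(g_v^f) \ge 0$ — which holds for all $v \notin S$ by Theorem~\ref{thm:trace} together with the remark following it — no subtlety remains. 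I would write this up in the valuation-theoretic language to avoid ever manipulating divergent Laurent tails.

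**Conclusion.** Having shown $W \in \MM[X,Y_1,\dotsc,Y_m]$, both $[ \Spec\fo \dtimes W ]$ (which uniformly represents $\Zeta$, with the point scheme having $\Euler = 1$) and $\sum_j [V_j \dtimes W_j]$ are expandable presentations of $\Zeta$; applying Theorem~\ref{thm:pre_rigidity} to their pointwise difference yields $\red W = \sum_j \Euler(V_j(\CC)) \red{W_j} = \Zeta_{\topo}$, which is the claimed identity.
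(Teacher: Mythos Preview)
There is a genuine gap in the core step. You have the identity
\[
W(q_v^f,Y_1,\dotsc,Y_m)\;=\;\sum_j \noof{V_j(\RF_v^{(f)})}\,W_j(q_v^f,Y_1,\dotsc,Y_m)
\]
in $\QQ(Y_1,\dotsc,Y_m)$ for each good pair $(v,f)$, and you want to deduce that the $(X-1)$-adic valuation $\nu$ of $W(X,X^{-s_1},\dotsc,X^{-s_m})$ is nonnegative. But $\nu$ is defined on $\QQ(s_\bullet)\llp X-1\rrp$, and the right-hand side above is not an element of that ring: $X$ has already been specialised to $q_v^f$, and the coefficients $\noof{V_j(\RF_v^{(f)})}$ depend on $(v,f)$, not on~$X$. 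If instead you freeze $(v,f)$ and regard $\sum_j \noof{V_j(\RF_v^{(f)})}\,W_j(X,Y)$ as a function of $(X,Y)$, then yes, its image in $\QQ(s_\bullet)\llp X-1\rrp$ has valuation $\ge 0$; but this function agrees with $W(X,Y)$ only along the hyperplane $X=q_v^f$, not identically, so you cannot compare valuations. Your phrase ``values $q_v^f-1$ accumulating at~$0$'' is the right intuition but is simply false in $\RR$ (all such values are $\ge 1$); making it true requires passing to an $\ell$-adic topology, and then one must verify that the relevant formal series actually converge $\ell$-adically and that the numerical identities survive the limit.

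This is exactly the work the paper does. It invokes \cite[Thm~5.12]{topzeta} to obtain a continuous $\ell$-adic extension $\Phi$ of $(f;\bm s)\mapsto\hat\Zeta_{k_v^{(df)}}(\bm s)$ to $\ZZ_\ell\times\bigl(\ZZ_\ell^m\setminus\cH(\ZZ_\ell)\bigr)$, then writes $g(X,X^{-s})=(X-1)^w G(s;X-1)$, picks sequences $f_n\to 0$ in $\ZZ_\ell$ and $\bm s_n\to\bm s_\infty$ with $G(\bm s_\infty;0)\ne 0$, and compares $\ell$-adic sizes: since $\Phi(f_n;\bm s_n)$ stays bounded while each factor $1-x_n^{e_{in}}$ has $\ell$-adic size $\asymp|x_n-1|_\ell$, one is forced to have $w\ge\noof I$. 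Your approach can be salvaged along these lines (fix integer $\bm s$ off the hyperplanes, note that the right-hand side stays $\ell$-adically bounded because point counts lie in $\ZZ\subset\ZZ_\ell$ and each $W_j(X,X^{-\bm s})$ is regular at $X=1$, while the left-hand side blows up if $w<\noof I$), but as written the crucial limiting argument is missing, and the sentence ``$\nu$ of the right-hand side is $\ge 0$, so $\nu(W)\ge 0$'' does not stand on its own.
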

\begin{proof}
  By \cite[Thm~5.12]{topzeta}, there exists a finite union of affine
  hyperplanes $\cH \subset\AA^m_{\ZZ}$ 
  such that for any rational prime $\ell$ and almost all $v\in \Places_k$,
  there exists $d\in \NN$ such that 
  \[
  \NN\times \ZZ^m\setminus \cH(\ZZ)\to \QQ, 
  \quad
  (f;\bm s) \mapsto \hat\Zeta_{k_v^{(df)}}(\bm s_1,\dotsc,\bm s_m)
  \]
  is well-defined and admits a continuous extension
  $\Phi\colon\ZZ_\ell^{\phantom m}\!\!\times
  \ZZ_\ell^m \setminus \cH(\ZZ_\ell) \to \QQ_\ell$.
  By enlarging~$\cH$, we may assume that
  $a_i \not= \sum_{j=1}^m b_{ij}\bm s_j$ for $i\in I$ 
  and $\bm s \in \ZZ_\ell^m\setminus\cH(\ZZ_\ell)$.
  Since $W$ uniformly represents $\Zeta$, 
  we may choose $(\cH,\ell,v,d)$ such that,
  in addition to the conditions from above,
  $\Phi(f;\bm s) = W(q_v^{df},q_v^{-df\bm s_1},\dotsc,q_v^{-df\bm s_m})$
  for $(f;\bm s)\in \NN\times\ZZ^m\setminus\cH(\ZZ)$.
  Finally, we may also assume that $\ell\not= 2$ and that $q_v^d \equiv 1 \bmod {\ell}$.

  Let $w$ be the $(X-1)$-adic valuation of $g(X,X^{-s_1},\dotsc,X^{-s_m}) \in
  \QQ[s_1,\dotsc,s_m]\llb X-1\rrb$.
  Define $G(s_1,\dotsc,s_m;X-1)\in \QQ[s_1,\dotsc,s_m]\llb X-1\rrb$ by
  \[
  g(X,X^{-s_1},\dotsc,X^{-s_m}) = (X-1)^w \dtimes G(s_1,\dotsc,s_m;X-1)
  \]
  and note that the constant term $G(s_1,\dotsc,s_m;0)$ is non-zero.
  As in the proof of \cite[Lem.~5.6]{topzeta},
  using the $\ell$-adic binomial series, 
  for $f\in \ZZ_\ell$ and $\bm s\in \ZZ_\ell^m$, we have
  \begin{equation}
    \label{eq:gG}
    g( (q_v^d)^f,(q_v^d)^{-f \bm s_1},\dotsc,(q_v^d)^{-f\bm s_m})
    = ((q_v^d)^{f}-1)^w \dtimes G(\bm s;(q_v^d)^f-1).
  \end{equation}
  Choose $\bm s_\infty\in \ZZ_\ell^m\setminus \cH(\ZZ_\ell)$ such that
  $G(s_1,\dotsc,s_m;0)$ does not vanish at $\bm s_\infty$.
  Further choose $(f_n)_{n\in \NN}\subset\NN$ with $f_n \to 0$ in $\ZZ_\ell$
  and $(\bm s_n)_{n\in \NN}\subset\ZZ^m\setminus \cH(\ZZ)$ with $\bm s_n\to
  \bm s_\infty$ in $\ZZ_\ell^m$.
  Define $x_n := q_v^{df_n}$.
  Then
  \begin{equation}
    \label{eq:WPhi}
    W(x_n^{\phantom{s_n}}\!\!,x_n^{-\bm s_{n1}},\dotsc,x_n^{-\bm s_{nm}}) =
    \Phi(f_n;\bm s_n) \to
    \Phi(0;\bm s_\infty)\in \QQ_\ell
  \end{equation}
  as $n\to \infty$.
  Let $e_{in} := a_i - b_{i1} \bm s_{n1} - \dotsb - b_{im} \bm s_{nm}$.
  Using~\eqref{eq:gG}, the left-hand side of \eqref{eq:WPhi}
  coincides with
  \[
  G(\bm s_n;x_n-1) \dtimes \frac{(x_n-1)^w}{\prod_{i\in
      I}\bigl( 1-x_n^{e_{in}}\bigr)}
  \]
  and since the left factor converges to $G(\bm s_\infty;0) \in \QQ_\ell^\times$
  for $n\to \infty$,
  the right one converges to an element of $\QQ_\ell$.
  Since $\abs{x_n^e-1}_\ell = \abs{e}_\ell\dtimes \abs{x_n-1}_\ell$ for $e\in
  \ZZ_\ell$
  and $\lim_{n\to\infty}e_{in}\not= 0$ for $i\in I$,
  this is easily seen to imply $w \ge \noof I$ whence $W\in \MM[X,Y_1,\dotsc,Y_m]$.
\end{proof}

\section{Application: functional equations and uniformity}
\label{s:feqn}

Let $\mathsf A$ be a possibly non-associative $\ZZ$-algebra whose underlying
$\ZZ$-module is finitely generated of torsion-free rank~$d$. 
Voll~\cite[Thm~A]{Vol10} proved that, for almost all primes $p$, the local
subalgebra zeta function $\zeta_{\mathsf A\otimes \ZZ_p}^{\le}(s)$ satisfies the functional
equation
\begin{equation}
  \label{eq:proto_feqn}
  \zeta^{\le}_{\mathsf A\otimes \ZZ_p}(s) \big \vert_{p \to p^{-1}} = (-1)^d
  p^{\binom{d}2-ds} \dtimes
  \zeta^{\le}_{\mathsf A\otimes \ZZ_p}(s),
\end{equation}
where the operation of ``inverting $p$'' is defined with
respect to a judiciously chosen formula~\eqref{eq:denef}.
One may ask to what extent the operation ``$p \to p^{-1}$'' depends on the
chosen $V_i$ and $W_i$ in \eqref{eq:denef}.
As we will see in this section, it does not, in the sense that knowing that
the functional equation established by Voll behaves well under local base
extensions (as in \cite{DM91} and \cite{AKOV13}), we can conclude that any other
formula~\eqref{eq:denef} (subject to minor technical constraints) behaves in the same way
under inversion of $p$ for almost all~$p$.
In particular, we will see that the left-hand side of \eqref{eq:proto_feqn}
takes the expected form of symbolically inverting $p$ for uniform examples, a
fact which does not seem to have been spelled out before.

We begin by recalling the formalism for ``inverting primes'' from \cite{DM91}
which we then combine with our language of local maps from \S\ref{s:local_maps}.
First, let $U$ be a separated scheme of finite type over a finite field $\FF_q$.
As explained in \cite[\S 2]{DM91} and \cite[\S 1.5]{Ser12},
using the rationality of the Weil zeta function of $U$,
there are non-zero $m_1,\dotsc,m_u\in \ZZ$ and distinct non-zero
$\alpha_1,\dotsc,\alpha_u\in \bar\QQ$ such that for each $f\in \NN$, we have
\begin{equation}
  \label{eq:count}
  \noof{U(\FF_{q^f})} = \sum_{i=1}^u m_i \alpha_i^f.
\end{equation}
By \cite[Lem.~2]{DM91}, the $(m_i,\alpha_i)$ are unique up to permutation.
As in~\cite[\S 1.5]{Ser12}, one may thus use \eqref{eq:count} to
unambiguously extend the definition of $\noof{U(\FF_{q^f})}$ to arbitrary $f\in \ZZ$.
Note that by considering the action of $\mathrm{Gal}(\bar\QQ/\QQ)$ on the right-hand side of \eqref{eq:count},
the uniqueness of $\{(m_1,\alpha_1),\dotsc,(m_u,\alpha_u)\}$ implies
that $\noof{U(\FF_{q^f})}\in \QQ$ for $f\in \ZZ$.

\begin{lemma}
  \label{lem:local_rigidity}
  Let $U_1,\dotsc,U_r$ be separated $\FF_q$-schemes of finite type.
  Let $W_1,\dotsc,W_r\in \QQ(X,Y_1,\dotsc,Y_m)$ each be regular at
  $(q^f,Y_1,\dotsc,Y_m)$ for all $f\in \ZZ\setminus\{0\}$.
  If $$\sum_{i=1}^r \noof{U_i(\FF_{q^f})}\dtimes W_i(q^f,Y_1,\dotsc,Y_m) = 0$$
  for all $f \in \NN$, then this identity extends to all $f\in \ZZ\setminus\{0\}$.
\end{lemma}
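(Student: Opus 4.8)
The plan is to follow the pattern of the proof of Theorem~\ref{thm:rigidity}, with the uniqueness of the decomposition~\eqref{eq:count} --- equivalently, linear independence of distinct exponential functions of $f$ --- playing the rôle that Chebotarev's density theorem plays there. So the strategy is: clear denominators, reduce to a univariate statement, rewrite everything as a generalised power sum in $f$, and then invoke a Vandermonde argument.

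First I would clear denominators. Writing each $W_i$ in lowest terms as $g_i/h_i$ with $g_i,h_i\in\QQ[X,Y_1,\dotsc,Y_m]$, the hypothesis that $W_i$ is regular at $(q^f,Y_1,\dotsc,Y_m)$ for every $f\in\ZZ\setminus\{0\}$ means precisely that $X-q^f\nmid h_i$, i.e.\ $h_i(q^f,Y_1,\dotsc,Y_m)\ne 0$ in $\QQ[Y_1,\dotsc,Y_m]$, for all such $f$. Hence the fixed polynomial $D:=\prod_{i=1}^r h_i$ satisfies $D(q^f,Y_1,\dotsc,Y_m)\ne 0$ for every $f\in\ZZ\setminus\{0\}$, and $\tilde W_i:=DW_i\in\QQ[X,Y_1,\dotsc,Y_m]$. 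It therefore suffices to prove the claim with $\tilde W_i$ in place of $W_i$: granted $\sum_i\noof{U_i(\FF_{q^f})}\dtimes\tilde W_i(q^f,Y_1,\dotsc,Y_m)=0$ for all $f\in\ZZ\setminus\{0\}$, division by $D(q^f,Y_1,\dotsc,Y_m)$ recovers the original identity. Comparing coefficients of each monomial $Y_1^{a_1}\dotsb Y_m^{a_m}$ then reduces everything to the univariate case; so, renaming, I may assume $W_1,\dotsc,W_r\in\QQ[X]$.

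Next I would invoke~\eqref{eq:count}: for each $i$ there are non-zero $m_{ij}\in\ZZ$ and distinct non-zero $\alpha_{ij}\in\bar\QQ$ with $\noof{U_i(\FF_{q^f})}=\sum_j m_{ij}\alpha_{ij}^f$ for \emph{all} $f\in\ZZ$ --- this being exactly how the left-hand side is defined for $f\le 0$. Writing $W_i(X)=\sum_k d_{ik}X^k$, we get $\noof{U_i(\FF_{q^f})}\dtimes W_i(q^f)=\sum_{j,k}m_{ij}d_{ik}(\alpha_{ij}q^k)^f$, a generalised power sum in $f$ whose bases $\alpha_{ij}q^k$ are non-zero since $q\ge 2$ and $\alpha_{ij}\ne 0$. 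Collecting terms with a common base, the total $\Psi(f):=\sum_{i=1}^r\noof{U_i(\FF_{q^f})}\dtimes W_i(q^f)$ has the form $\sum_\ell e_\ell\beta_\ell^f$ for finitely many pairwise distinct non-zero $\beta_\ell\in\bar\QQ$ and $e_\ell\in\bar\QQ$. By hypothesis $\Psi(f)=0$ for all $f\in\NN$; since the functions $f\mapsto\beta_\ell^f$ attached to distinct $\beta_\ell$ are linearly independent --- a Vandermonde determinant applied at $f=1,\dotsc,u$, or equivalently the uniqueness statement \cite[Lem.~2]{DM91} --- all $e_\ell$ vanish, hence $\Psi(f)=0$ for every $f\in\ZZ$, in particular for all $f\in\ZZ\setminus\{0\}$. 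Unwinding the two reductions gives the lemma.

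I do not expect any genuine obstacle here. The only points that need a little care are the verification that the common denominator $D$ avoids $X=q^f$ for all $f\in\ZZ\setminus\{0\}$ (which is exactly what the regularity hypothesis supplies) and the routine bookkeeping that shows the class of finite sums $\sum_\ell e_\ell\beta_\ell^f$ with non-zero $\beta_\ell$ is closed under the operations used above; neither is serious.
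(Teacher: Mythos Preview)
Your proof is correct and follows essentially the same approach as the paper's: the paper's proof reduces to $W_i\in\ZZ[X]$ exactly as in the proof of Theorem~\ref{thm:rigidity} (which you reproduce) and then invokes \cite[Lem.~2]{DM91} and its Corollary, which is precisely the Vandermonde/linear-independence argument you spell out.
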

\begin{proof}
  As in the proof of Theorem~\ref{thm:rigidity}, we may reduce to the case where
  each $W_i\in \ZZ[X]$. The result then follows from
  \cite[Lem.~2]{DM91} and its Corollary, cf.~the proof of \cite[Lem.~3]{DM91}.
\end{proof}

\begin{rem}
  In the cases of interest to us, the denominators of the $W_i$ might be
  divisible by polynomials of the form $1-X^e$. This justifies
  the exclusion of $f = 0$ in Lemma~\ref{lem:local_rigidity}.
\end{rem}

\begin{cor}
  \label{cor:invrigid}
  Let $V_1,\dotsc,V_r$ be separated $\fo$-schemes of finite type and
  let $W_1,\dotsc,W_r \in \QQ(X,Y_1,\dotsc,Y_m)$ each be regular at
  $(q^f,Y_1,\dotsc,Y_m)$ for all integers $q > 1$ and $f\in \ZZ\setminus\{0\}$.
  Let $P\subset \Places_k$ have density $1$ 
  and suppose that for all $v \in P$,
 $$\sum\limits_{i=1}^r \noof{V_i(\RF_v)} \dtimes W_i(q_v,Y_1,\dotsc,Y_m) = 0.$$
  Then there exists a finite set $S\subset \Places_k$ such that for all $v \in
  \Places_k\setminus S$ and all $f\in \ZZ\setminus\{0\}$,
  $$\sum\limits_{i=1}^r \noof{V_i(\RF_v^{(f)})} \dtimes W_i(q_v^f,Y_1,\dotsc,Y_m) = 0.$$
\end{cor}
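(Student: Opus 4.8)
The plan is to combine Theorem~\ref{thm:rigidity} with Lemma~\ref{lem:local_rigidity}: the former handles the passage from a density-$1$ set of places to almost all places together with positive inertia degrees, while the latter handles, at each fixed place, the extension from positive to negative degrees.

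First I would apply Theorem~\ref{thm:rigidity} directly. Its hypotheses are satisfied by those of the present corollary; in particular, regularity of each $W_i$ at $(q^f,Y_1,\dotsc,Y_m)$ for all $f\in\ZZ\setminus\{0\}$ gives, as the special case $f=1$, regularity at $(q,Y_1,\dotsc,Y_m)$ for every integer $q>1$. Thus there exists a finite set $S\subset\Places_k$ such that
\[
\sum_{i=1}^r \noof{V_i(\RF_v^{(f)})}\dtimes W_i(q_v^f,Y_1,\dotsc,Y_m) = 0
\]
for all $v\in\Places_k\setminus S$ and all $f\in\NN$. It remains to upgrade ``$f\in\NN$'' to ``$f\in\ZZ\setminus\{0\}$'' for each such $v$.

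Next, I would fix $v\in\Places_k\setminus S$, put $q := q_v$, and set $U_i := V_i\otimes_{\fo}\RF_v$, a separated $\FF_q$-scheme of finite type. Under the identification $\RF_v^{(f)} = \FF_{q^f}$ from the Notation section, one has $\noof{V_i(\RF_v^{(f)})} = \noof{U_i(\FF_{q^f})}$ for every $f\in\NN$; moreover, by construction, the extension of $f\mapsto\noof{U_i(\FF_{q^f})}$ to $f\in\ZZ$ afforded by~\eqref{eq:count} is precisely the one appearing in the statement of the corollary for negative~$f$. Since each $W_i$ is regular at $(q^f,Y_1,\dotsc,Y_m)$ for all $f\in\ZZ\setminus\{0\}$, Lemma~\ref{lem:local_rigidity} applies (with $q$ in place of $q$ and $U_i$ in place of $U_i$) and extends the displayed vanishing identity from $f\in\NN$ to all $f\in\ZZ\setminus\{0\}$. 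As $S$ is finite and $v\in\Places_k\setminus S$ was arbitrary, this is exactly the assertion of the corollary.

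There is no real obstacle here, as both ingredients are already available; the only points requiring (routine) attention are the bookkeeping of the regularity hypotheses---checking that the conditions imposed on the $W_i$ are strong enough to invoke both Theorem~\ref{thm:rigidity} and Lemma~\ref{lem:local_rigidity}---and the observation that the extended point counts $\noof{V_i(\RF_v^{(f)})}$ for $f<0$ carry the same meaning in the corollary and in Lemma~\ref{lem:local_rigidity}, namely the one dictated by the rationality of the Weil zeta function of $V_i\otimes_{\fo}\RF_v$.
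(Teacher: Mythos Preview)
Your proof is correct and follows exactly the same approach as the paper, which simply says ``Combine Theorem~\ref{thm:rigidity} and Lemma~\ref{lem:local_rigidity}.'' You have spelled out the details of that combination carefully and accurately, including the bookkeeping on regularity hypotheses and the identification of negative-degree point counts via the Weil zeta function.
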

\begin{proof}
  Combine Theorem~\ref{thm:rigidity} and Lemma~\ref{lem:local_rigidity}.
\end{proof}

Let $V_1,\dotsc,V_r$ be separated $\fo$-schemes of finite type, $W_1,\dotsc,W_r\in
\QQ(X,Y_1,\dotsc,Y_m)$ each be regular at $(q^f,Y_1,\dotsc,Y_m)$ for all
integers $q > 1$ and $f\in \ZZ\setminus\{0\}$, and let $\Zeta$ be a $k$-local map which is equivalent to the sum
$[V_1\dtimes W_1] + \dotsb + [V_r\dtimes W_r]$.
For a sufficiently large finite set $S\subset\Places_k$, the map
\begin{equation}
  \label{eq:ext}
\Zeta_*\colon
\Places_k\setminus S \times \ZZ\setminus\{0\} \to
\QQ(Y_1,\dotsc,Y_m),
\,\,
(v,f) \mapsto \sum_{i=1}^r \noof{V_i(\RF_v^{(f)})}\dtimes  W_i(q_v^f,Y_1^{\sign(f)},\dotsc,Y_m^{\sign(f)})
\end{equation}
satisfies $\Zeta_*(v,f) = \Zeta(v,f)$ for $v\in \Places_k\setminus S$ and $f\in \NN$.
By Lemma~\ref{lem:local_rigidity},
up to enlarging~$S$, the map $\Zeta_*$ is uniquely determined by the equivalence
class of $\Zeta$ and therefore, in particular, independent of the chosen $V_i$ and $W_i$.
In accordance with the definition of $\hat\Zeta$ from~\S\ref{ss:denef_type},
we write $$\hat\Zeta_*(v,f) := \Zeta_*(v,f)(q_v^{-fs_1},\dotsc,q_v^{-fs_m}).$$

\begin{lemma}[{Cf.~\cite[Cor.\ to Thm~4]{DM91}}]
  \label{lem:uninverse}
  Suppose that the $k$-local map $\Zeta$ is uniformly represented by a rational
  function $W\in \QQ(X,Y_1,\dotsc,Y_m)$ which is regular at each point
  $(q^f,Y_1,\dotsc,Y_m)$ for all integers $q>1$ and $f\in \ZZ\setminus\{0\}$.
  Then for almost all $v\in \Places_k$ and all $f\in \NN$, we have
  $\Zeta_*(v,-f) = W(q_v^{-f},Y_1^{-1},\dotsc,Y_m^{-1})$
  and therefore
  $\hat\Zeta_*(v,-f) = W(q_v^{-f},q_v^{fs_1},\dotsc,q_v^{fs_m})$. \qed
\end{lemma}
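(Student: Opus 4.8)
The plan is to exploit the freedom in writing $\Zeta$ as a finite sum of maps $[V\dtimes W]$, taking the schemes $V$ as simple as possible. Since $W$ uniformly represents $\Zeta$, the local map $\Zeta$ is equivalent to the single summand $[V_0\dtimes W]$ with $V_0=\Spec\fo$; indeed $\noof{V_0(\RF_v^{(f)})}=1$, so $[V_0\dtimes W](v,f)=W(q_v^f,Y_1,\dotsc,Y_m)$. The hypothesis that $W$ is regular at every point $(q^f,Y_1,\dotsc,Y_m)$ for integers $q>1$ and $f\in\ZZ\setminus\{0\}$ is exactly what makes $(V_0,W)$ an admissible datum for the extension \eqref{eq:ext}. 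Hence, by Lemma~\ref{lem:local_rigidity} and the discussion preceding the present statement, after discarding a finite set of places the map $\Zeta_*$ attached to $\Zeta$ agrees with the map that \eqref{eq:ext} produces from the representation $[V_0\dtimes W]$.

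The one genuine input is the behaviour of the point-count of $V_0$ under the extension of $f$ to $\ZZ$ recorded after \eqref{eq:count}: since $V_0$ is a single $\fo$-point, \eqref{eq:count} holds over each residue field $\RF_v$ with $u=1$, $m_1=1$, $\alpha_1=1$, so this extension is constantly equal to $1$; in particular $\noof{V_0(\RF_v^{(-f)})}=1$ for all $f\in\NN$.

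I would then substitute this into \eqref{eq:ext} with second argument $-f$: since $\sign(-f)=-1$, this gives $\Zeta_*(v,-f)=W(q_v^{-f},Y_1^{-1},\dotsc,Y_m^{-1})$ for all $v$ outside the finite exceptional set and all $f\in\NN$, and the asserted formula for $\hat\Zeta_*(v,-f)$ is then immediate from the definition of $\hat\Zeta_*$. The computation itself is essentially trivial; the only thing requiring care is bookkeeping: identifying the finitely many places that must be excluded (those where $\Zeta$ is not yet represented by $[V_0\dtimes W]$, together with those supplied by Lemma~\ref{lem:local_rigidity}), and correctly composing the exponent $\sign(f)$ occurring in \eqref{eq:ext} with the substitution built into the passage from $\Zeta_*$ to $\hat\Zeta_*$.
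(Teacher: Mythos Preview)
Your proposal is correct and coincides with the paper's own argument: the paper gives no separate proof (the statement is closed with a bare \qedsymbol), treating the lemma as an immediate consequence of the uniqueness of $\Zeta_*$ established just before together with the obvious representation $[\Spec(\fo)\dtimes W]$, exactly as you spell out. Your remark that the only nontrivial ingredient is the extension of the point count of $\Spec(\fo)$ to negative $f$ via \eqref{eq:count} (trivially $u=1$, $m_1=\alpha_1=1$) is precisely the content of the reference to \cite[Cor.\ to Thm~4]{DM91}.
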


The known explicit formulae for the zeta functions from
\S\S\ref{sss:subobjects}--\ref{sss:perfect} 
satisfy the regularity conditions on the $W_i$ from above, allowing us to
consider extensions of the form~\eqref{eq:ext}.
These extensions satisfy the following functional equations.

\begin{thm}
  \label{thm:functional}
  \quad
  \begin{enumerate}
    \item
      \label{thm:functional1}
      \textup{(\cite[Thm~A]{Vol10})}
      Let $\cA$ be a not necessarily associative $k$-algebra of dimension $d$.
      Then for almost all $v\in \Places_k$ and all $f\in \NN$,
      $$\Zeta_*^{\cA}(v,-f) = (-1)^d (q_v^f)^{\binom d 2}Y^d \dtimes \Zeta^{\cA}(v,f).$$
    \item
      \textup{(\cite[Thm~A]{AKOV13})}
      Let $\bm\fg$ be a perfect Lie $k$-algebra of dimension $d$.
      Then for almost all $v\in \Places_k$ and all $f\in \NN$,
      $$\Zeta_*^{\bm\fg,\irr}(v,-f) = q_v^{df} \dtimes \Zeta^{\bm\fg,\irr}(v,f).$$
    \item
      \textup{(\cite[Thm~A]{SV14})}
      Let $\GG$ be a unipotent algebraic group over $k$.
      Let $d$ be the dimension of the (algebraic) derived subgroup of $\GG$.
      Then for almost all $v\in \Places_k$ and all $f\in \NN$,
      $$\Zeta_*^{\GG,\wirr}(v,-f) = q_v^{df} \dtimes \Zeta^{\GG,\wirr}(v,f).$$
    \end{enumerate}
\end{thm}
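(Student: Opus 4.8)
The plan is to deduce each functional equation from its known form over the base field combined with the rigidity results already established. I will treat the three parts uniformly; the only differences are the constant/monomial prefactors and the source reference. Fix one of the three cases, say part~\eqref{thm:functional1}, and let $\cA$ be a $k$-algebra of dimension $d$. The starting point is Voll's theorem \cite[Thm~A]{Vol10}, which—after choosing a suitable Denef-type formula for $\Zeta^{\cA}$—gives the functional equation \eqref{eq:proto_feqn} over $\QQ$, i.e.\ for $k=\QQ$ and $K=\QQ_p$. The first task is to promote this to an identity of the form appearing in the statement, namely $\Zeta_*^{\cA}(v,-f) = (-1)^d (q_v^f)^{\binom d2}Y^d\dtimes \Zeta^{\cA}(v,f)$ for almost all $v\in\Places_k$ and all $f\in\NN$.

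First I would reduce to the case $k=\QQ$ and $f=1$. Given the $k$-algebra $\cA$, pick a $\QQ$-subalgebra $\cA_0$ with $\cA_0\otimes_\QQ k \cong_k \cA$ after replacing $k$ by a finite extension if necessary (or, more robustly, work directly with an $\fo$-form $\A$ and observe that $\A\otimes_\fo \fO_K$ makes sense for every local $K$); the construction of $\Zeta^{\cA}$ in \S\ref{sss:subobjects} shows that $\hat\Zeta^{\cA}_K(s) = \zeta^{\le}_{\A\otimes_\fo\fO_K}(s)$ for all finite $K/k_v$, $v\notin S$. Since $\A\otimes_\fo \fO_K$ and $\A\otimes_\ZZ \ZZ_{p}$ are related by base extension along an unramified map, and since Voll's functional equation over $\QQ_p$ is proved by $p$-adic integration, the right strategy is to invoke an already-established \emph{base-extended} form of \eqref{eq:proto_feqn}: such a statement holds for all finite extensions of $\QQ_p$ (as in \cite{DM91}), giving
\[
\zeta^{\le}_{\A\otimes\fO_K}(s)\big\vert_{q_K\to q_K^{-1}} = (-1)^d q_K^{\binom d2 - ds}\dtimes \zeta^{\le}_{\A\otimes\fO_K}(s)
\]
for almost all $p$ and all finite $K/\QQ_p$. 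Translating this through the extension $\Zeta_*$ of \eqref{eq:ext}, and recalling that $q_K^{-ds}$ corresponds to $Y^d$ under the substitution $Y\mapsto q_K^{-s}$ together with the sign flip in \eqref{eq:ext}, yields exactly the asserted identity for $k=\QQ$.

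The passage from $\QQ$ to an arbitrary number field $k$ is then purely formal via \emph{equivalence of local maps}: if the $\fo$-form $\A$ of $\cA$ is obtained from a $\ZZ$-form $\A_0$ of a $\QQ$-algebra by base change, then for almost all $v\in\Places_k$ the completion $\A\otimes_\fo\fO_{k_v^{(f)}}$ coincides with $\A_0\otimes_\ZZ \fO_{\QQ_{p_v}^{(f\cdot f_v)}}$, so the functional equation over $k_v^{(f)}$ follows from the one over an unramified extension of $\QQ_{p_v}$. For a general $k$-algebra $\cA$ not defined over $\QQ$, one instead reruns the argument using that the Denef-type formula for $\Zeta^{\cA}$ itself is obtained from du~Sautoy--Grunewald $p$-adic integration, which is insensitive to the ground field, so the functional equation of \cite{Vol10} applies verbatim to $\hat\Zeta^{\cA}_{k_v^{(f)}}(s) = \zeta^{\le}_{\A\otimes_\fo\fO_{k_v^{(f)}}}(s)$. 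Either way, one obtains the claimed identity for all $f\in\NN$ and almost all $v$; Lemma~\ref{lem:local_rigidity} guarantees that the resulting relation between $\Zeta_*^{\cA}(v,-f)$ and $\Zeta^{\cA}(v,f)$ is independent of the chosen $V_i$ and $W_i$. Parts (ii) and (iii) are handled identically, substituting \cite[Thm~A]{AKOV13} and \cite[Thm~A]{SV14}, respectively, for \cite[Thm~A]{Vol10}; in case (ii) one restricts, as in \S\ref{sss:perfect}, to unramified extensions $k_v^{(f)}/\QQ_{p_v}$, which is harmless since those are exactly the extensions over which $\Zeta^{\bm\fg,\irr}$ is defined.

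The main obstacle is the very first reduction: one must know that Voll's functional equation (and likewise those of \cite{AKOV13} and \cite{SV14}) holds not merely for $\QQ_p$ but for all finite (or, in case (ii), all unramified) extensions of $\QQ_p$. This is where the content lies, and it is genuinely present in the cited papers—Voll's proof via $p$-adic integration and the Denef-type explicit formulae automatically accommodate base extension, and \cite{AKOV13,SV14} state their functional equations in this generality. Once the base-extended functional equations are in hand, the remaining work—matching up the $Y$-substitution with the sign convention in \eqref{eq:ext}, unwinding the relationship between $\fO_K$ and unramified extensions of $\ZZ_p$, and invoking equivalence of local maps—is bookkeeping. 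Rigidity (Theorem~\ref{thm:rigidity} and Lemma~\ref{lem:local_rigidity}) is what makes the final statement formula-independent, but strictly speaking it is not needed to \emph{prove} the displayed identities; it is needed to know that $\Zeta_*$ is well-defined, which was already settled before the theorem.
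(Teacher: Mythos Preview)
Your proposal is correct and follows essentially the same route as the paper, which does not give a separate proof but simply cites the external results and remarks that, for part~(i), \cite[Thm~A]{Vol10} as stated covers only $k=\QQ$ and $f=1$, with the general case obtained by rerunning Voll's $p$-adic integration argument over arbitrary completions (using the formalism of \cite[\S 3.1]{Vol10} and the techniques of \cite[\S 4.2]{AKOV13}); parts~(ii) and~(iii) are already stated in the required generality in \cite{AKOV13,SV14}. Your identification of the key content---that the cited functional equations hold over all relevant local extensions and not just over $\QQ_p$---is exactly right, as is your observation that Lemma~\ref{lem:local_rigidity} is what makes $\Zeta_*$ well-defined rather than what proves the identity.

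One small caution: your first reduction, ``pick a $\QQ$-subalgebra $\cA_0$ with $\cA_0\otimes_\QQ k\cong_k\cA$ after replacing $k$ by a finite extension if necessary'', is not a valid move---a general $k$-algebra need not descend to $\QQ$, and enlarging $k$ goes the wrong way. You immediately recognise this and switch to the correct approach (rerun the integral argument over $\fO_K$ directly), so it does not affect the validity of your proof, but in a final write-up you should simply drop the descent attempt.
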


\begin{rem}
  \quad
  \begin{enumerate}
    \item
      Regarding Theorem~\ref{thm:functional}(\ref{thm:functional1}), we
      note that while \cite[Thm~A]{Vol10} only spells out the functional equation for $k = \QQ$ and $f =
      1$, the general case follows using the arguments in~\cite[\S 4.2]{AKOV13} and
      the fact that the formalism from~\cite[\S 3.1]{Vol10} applies to extensions of
      $\QQ$ and $\QQ_p$, respectively, after applying simple textual changes such as
      replacing $p$ by either $q$ or a fixed uniformiser.
    \item
      The functional equations in \cite[Thm~B--C]{Vol10} can be similarly
      rephrased using the formalism from the present article.
    \end{enumerate}
\end{rem}

Let us return to the setting from the opening of this section.
Suppose that \eqref{eq:denef} holds for almost all~$p$, where the $V_i$ are
separated $\ZZ$-schemes of finite type and the $W_i\in \QQ(X,Y)$ are regular at
$(q^f,Y)$ for all integers $q > 1$ and $f\in \ZZ\setminus\{0\}$.
Then Corollary~\ref{cor:invrigid} and
Theorem~\ref{thm:functional}(\ref{thm:functional1}) show that for almost all $p$,
\[
\sum_{i=1}^r \noof{V_i(\FF_{p^{-1}})}\dtimes W_i(p^{-1},p^s) =
(-1)^dp^{\binom d 2 - ds} \dtimes \zeta^\le_{\mathsf A\otimes\ZZ_p}(s),
\]
regardless of whether the $V_i$ and $W_i$ were obtained 
as in the proof of Theorem~\ref{thm:functional}(\ref{thm:functional1}) or
not.
Note that in the uniform case ($r = 1$, $V_1 = \Spec(\ZZ)$), we 
obtain
\begin{equation}
  \label{eq:unifun}
  W_1(X^{-1},Y^{-1}) = X^{\binom d 2}Y^d \dtimes W_1(X,Y);
\end{equation}
recall that uniformity in the sense of \cite{dSW08} essentially coincides with
our notion of uniformity for local maps thanks to Corollary~\ref{cor:uniform}.
This explains why the various uniform examples of local subalgebra 
zeta functions computed by Woodward \cite{Woo05,dSW08}
satisfy the functional equation~\eqref{eq:unifun} even though the methods he
used to compute them differ considerably from Voll's approach.

{
  \bibliographystyle{abbrv}
  \footnotesize
  \bibliography{stability}
}

\end{document}